\let\originallhook\lhook
\let\lhook\originallhook
\title[Stable bundles on the fibres of the hyperk\"ahler twistor projection]{Families of stable bundles on the fibres of the hyperk\"ahler twistor projection}
\author{Artour Tomberg}
\thanks{The study has been funded within the framework of the HSE University Basic Research Program and the Russian Academic Excellence Project '5-100'.}
\address{Faculty of Mathematics, National Research University Higher School of Economics, 6 Usacheva St., Moscow, Russia, 119048}
\address{Department of Mathematics, Western University, Middlesex College, London, Ontario, Canada, N6A 5B7}
\email{artour@tomberg.com}
\DeclareMathOperator{\Hom}{Hom}
\DeclareMathOperator{\Imag}{Im}
\DeclareMathOperator{\id}{id}
\DeclareMathOperator{\Tw}{Tw}
\DeclareMathOperator{\Pic}{Pic}
\DeclareMathOperator{\rk}{rk}
\DeclareMathOperator{\Quot}{Quot}
\DeclareMathOperator{\Dou}{Dou}
\DeclareMathOperator{\Supp}{Supp}
\DeclareMathOperator{\Vol}{Vol}
\DeclareMathOperator{\Ker}{Ker}
\DeclareMathOperator{\Tor}{Tor}
\newcommand{\bigslant}[2]{{\raisebox{.2em}{$#1$}\left/\raisebox{-.2em}{$#2$}\right.}}
\newcommand{\hooklongrightarrow}{\lhook\joinrel\longrightarrow}
\numberwithin{equation}{section}
\begin{document}

\addtocontents{toc}{\protect\setcounter{tocdepth}{1}}

\begin{abstract}
Given a holomorphic vector bundle $E$ on the twistor space $\Tw(M)$ of a simple hyperk\"ahler manifold $M$, we view it as a family of bundles $\left\{E_I\right\}$ on the fibres $\pi^{-1}(I)$ of the twistor projection $\pi : \Tw(M) \to \mathbb{CP}^1$, and study the relationship between stability of $E$ and its fibrewise stability. We verify that the argument of Teleman establishing the Zariski openness of stability and semi-stability in families of bundles applies in the case of the family $\left\{E_I\right\}$. We prove a partial converse to a result of Kaledin and Verbitsky, showing that an irreducible bundle $E$ on $\Tw(M)$ is generically fibrewise stable if the rank of $E$ is 2 or 3, or at least one element of the family $\left\{E_I\right\}$ is a simple bundle, in the sense that $\Hom(E_I, E_I) = \mathbb{C}$.
\end{abstract}

\maketitle

\tableofcontents

%%%%%%%%% Introduction %%%%%%%%%
\section{Introduction}

The interplay between stability of vector bundles and Hermitian-Einstein metrics has been a very active area of research in differential geometry in the 1980s, and the concerted efforts of many of the leading mathematicians of the time have led to the result that is now known as the Kobayashi-Hitchin correspondence (Theorem \ref{thm:kobayashi-hitchin}, see \cite{lubke-teleman} for a reference). Among its many other applications, this deep theorem gives a better understanding of the geometry of moduli spaces of stables bundles. These are mostly studied in the projective algebraic context, where there is a rich and developed theory (see for example \cite{huyb-lehn}). However, for non-algebraic, and especially for non-K\"ahler manifolds, the theory is more difficult. The first explicit description of the moduli space of stable bundles on a non-K\"ahler manifold was given by Braam and Hurtubise in the paper \cite{braam-hurtubise}, where they studied stable $SL(2, \mathbb{C})$-bundles on primary elliptic Hopf surfaces. Since then, while there have been further results about moduli spaces and their structure (see \cite{brin-mor,apr-mor-toma,brin-halan-traut}), much remains unknown in the non-K\"ahler case.

One of the things that makes the non-K\"ahler case more difficult is the presence of irreducible bundles. These are bundles which don't have any proper subsheaves of lower rank, and are thus always stable, vacuously. They don't occur on algebraic manifolds, where one always has recourse to various filtrations. In contrast to filtrable bundles, irreducible bundles cannot be constructed as extensions of coherent sheaves, and no general method of constructing such bundles is known.

The subject of this paper is stability of vector bundles on a compact hyperk\"ahler manifold $M$ and its twistor space $\Tw(M)$. Recall that a hyperk\"ahler manifold $M$ comes equipped with a (non-K\"ahler) twistor space $\Tw(M)$ and a holomorphic twistor projection $\pi : \Tw(M) \to \mathbb{CP}^1$, whose fibres $M_I := \pi^{-1}(I)$, $I \in \mathbb{CP}^1$, although K\"ahler, are mostly non-algebraic (see Proposition \ref{thm:dense}). We will view a holomorphic vector bundle $E$ on $\Tw(M)$ as a family $\left\{E_I := \left.E\right|_{M_I}\right\}$ of bundles on the fibres $M_I$ of the projection $\pi$. A natural question to consider in this context is how the stability of $E$ as a bundle on $\Tw(M)$ is related to the stability of its restrictions $E_I$.

In the paper \cite{kaled-verbit}, Kaledin and Verbitsky prove, among other things, that if $E$ is generically fibrewise stable, that is, its restriction $E_I$ is stable for Zariski generic $I \in \mathbb{CP}^1$, then $E$ itself is stable as a bundle on $\Tw(M)$ (Lemma 7.3 in \cite{kaled-verbit}). The converse to this statement does not hold, as was shown in \cite{tomberg2}, which gives an explicit example of a stable bundle $E$ on $\Tw(M)$ which is nowhere fibrewise stable. However, as is evident from the proof of Lemma 7.3 in \cite{kaled-verbit}, the statement of the result of Kaledin and Verbitsky can be made stronger in the following way.

\theoremstyle{plain}
\newtheorem*{forward}{Theorem}
\begin{forward} \label{thm:forward}
\emph{\textbf{(Kaledin-Verbitsky)}} Let $M$ be a compact hyperk\"ahler manifold and let $E$ be a holomorphic vector bundle on the twistor space $\Tw(M)$. If $E$ is generically fibrewise stable, then it is irreducible.
\end{forward}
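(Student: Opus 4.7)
The plan is to argue by contradiction: I would suppose $E$ admits a proper coherent subsheaf $F$ with $0 < \rk F < \rk E$, and show that $E$ cannot be generically fibrewise stable. The first step is generic restriction: by standard flatness arguments, for $I$ in a Zariski-dense open subset of $\mathbb{CP}^1$ the restriction $F_I$ is torsion-free of rank $\rk F$, hence a proper non-trivial coherent subsheaf of $E_I$.

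The core cohomological computation would then show that the slopes $\mu_I(F_I)$ and $\mu_I(E_I)$ both vanish for every $I$. Because $\Tw(M)$ is $C^\infty$-diffeomorphic to $M \times \mathbb{CP}^1$ and $H^1(\mathbb{CP}^1, \mathbb{R}) = 0$, the K\"unneth formula gives $H^2(\Tw(M), \mathbb{R}) = H^2(M, \mathbb{R}) \oplus \mathbb{R} \cdot \pi^*[\omega_{FS}]$. Writing $c_1(F) = \alpha + \lambda \cdot \pi^*[\omega_{FS}]$, the restriction $c_1(F_I) = \alpha$ is a fixed class in $H^2(M, \mathbb{R})$, independent of $I$. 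Since $F_I$ is coherent on $M_I$ for every $I$, $\alpha$ is of type $(1,1)$ with respect to every $I \in \mathbb{CP}^1$, which is equivalent to $\alpha$ being $SU(2)$-invariant under the natural hyperk\"ahler $SU(2)$-action on $H^*(M, \mathbb{R})$. Thus $\alpha$ lies in the invariant subspace $V \subset H^2(M, \mathbb{R})$, which is $q$-orthogonal to the $3$-dimensional subspace $W$ spanned by $\omega_I, \omega_J, \omega_K$ under the Bogomolov-Beauville-Fujiki form $q$. Fujiki's formula (for $\dim_\mathbb{C} M = 2n$) then gives
\[
\deg_{M_I}(F_I) = \int_{M_I} \alpha \wedge \omega_I^{2n-1} = c_n \cdot q(\alpha, \omega_I) \cdot q(\omega_I)^{n-1} = 0,
\]
so $\mu_I(F_I) = 0$; the identical computation yields $\mu_I(E_I) = 0$.

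To conclude: for generic $I$ the bundle $E_I$ is stable, forcing every proper non-trivial torsion-free subsheaf to have strictly smaller slope, but $F_I$ has $\mu_I(F_I) = 0 = \mu_I(E_I)$, a contradiction. I expect the main obstacle to be the identification step placing $\alpha$ in the $SU(2)$-invariant subspace of $H^2(M, \mathbb{R})$, which requires the $C^\infty$-trivialization of the twistor family together with the standard equivalence between being of type $(1,1)$ for every induced complex structure and $SU(2)$-invariance on cohomology. Once this is set up, the orthogonality of $SU(2)$-representations under $q$ and Fujiki's formula make the vanishing of slopes automatic, and the contradiction with fibrewise stability is immediate.
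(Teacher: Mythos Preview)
Your argument is correct. Both you and the paper reduce to showing that on a suitably chosen fibre $M_I$ any proper subsheaf has the same slope as $E_I$, contradicting stability; the difference is in how the slope-zero statement is obtained. The paper simply picks $I$ in the dense set $S_0$ of \emph{generic} induced complex structures (Proposition~\ref{thm:dense}) for which $E_I$ is stable; by the very definition of $S_0$, every integral $(1,1)$-class on such an $M_I$ is $SU(2)$-invariant, and Lemma~\ref{thm:degr} then forces \emph{every} torsion-free sheaf on $M_I$ to have degree zero---so $E_I$ is in fact irreducible, and restricting a hypothetical $\mathcal{F}\subset E$ gives the contradiction. Your route instead uses that $F$ is defined globally on $\Tw(M)$: via K\"unneth the class $c_1(F_I)=\alpha$ is independent of $I$, hence of type $(1,1)$ for every $I$, hence $SU(2)$-invariant. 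This has the merit of not invoking the density of $S_0$, but your concluding step through the Beauville--Bogomolov--Fujiki form and Fujiki's relation is heavier than necessary (Lemma~\ref{thm:degr} already yields $\int_M\alpha\wedge\omega_I^{2n-1}=0$ once $\alpha$ is $SU(2)$-invariant) and, unlike the paper's argument, it requires $M$ to be simple for Fujiki's relation to be available.
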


For a proof, see the first part of the proof of Theorem \ref{thm:result}. One may ask whether the converse to this stronger version of the result is in fact true. In other words, given an irreducible bundle $E$ on the twistor space $\Tw(M)$, will it always be stable on the generic fibre of the twistor projection $\pi : \Tw(M) \to \mathbb{CP}^1$? If the answer is yes, this would give a very nice characterization of irreducible bundles on the twistor space $\Tw(M)$ of a hyperk\"ahler manifold $M$. While we don't provide a full answer to this question in this paper, we will prove a partial converse to the above theorem. Namely, we show that for a simple hyperk\"ahler manifold $M$, an irreducible bundle $E$ on the twistor space $\Tw(M)$ is generically fibrewise stable provided that $E$ has rank 2 or 3, or at least one of the $E_I$ in the corresponding family is a simple holomorphic bundle, in the sense that $\Hom(E_I, E_I) = \mathbb{C}$.

The present article is a somewhat abridged version of the second part of the author's PhD thesis \cite{dissertaciya}. It is organized as follows. Section \ref{sect:preliminaries} is a review of hyperk\"ahler geometry and the theory of stability, where we state some technical results used in subsequent sections. In Section \ref{sect:teleman} we prove that for a family $\left\{E_I\right\}$ of bundles on the fibres of the twistor projection $\pi : \Tw(M) \to \mathbb{CP}^1$, fibrewise stability and semi-stability are Zariski open conditions on the base $\mathbb{CP}^1$. The proof is basically a verification that the argument of Teleman from \cite{teleman} adapts to the case of the twistor projection. Section \ref{sect:main} contains the proof of the main result of the article, namely that the converse to the above theorem holds in case $M$ is simple hyperk\"ahler and $E$ is of rank 2 or 3, or of general rank with the extra assumption of being simple on at least one fibre of the projection $\pi : \Tw(M) \to \mathbb{CP}^1$.

The author would like to thank Misha Verbitsky and Jacques Hurtubise for their help and support in the preparation of this manuscript.

%%%%%%%%% Preliminaries %%%%%%%%%
\section{Preliminaries} \label{sect:preliminaries}

We start by giving definitions of the objects that we will be working with and stating results (mostly without proof) that will be useful for us in subsequent sections. The main references will be the papers \cite{verbit1}, \cite{verbit2}, \cite{kaled-verbit} for hyperk\"ahler geometry, and the books \cite{kobayashi}, \cite{lubke-teleman}, \cite{okonek} for stability of vector bundles.

\theoremstyle{definition}
\newtheorem{hyperk}{Definition}[section]
\begin{hyperk}
A \emph{hyperk\"ahler} manifold is a smooth manifold $M$ together with a triple of integrable almost complex structures $I, J, K : TM \to TM$ that satisfy the quaternionic relations $I^2 = J^2 = K^2 = -1$, $IJ = -JI = K$, and a Riemannian metric $g$, which is simultaneously Hermitian with respect to the structures $I, J, K$, and whose Levi-Civita connection $\nabla$ satisfies $\nabla I = \nabla J = \nabla K = 0$.
\end{hyperk}

Note that $\nabla$ also preserves the corresponding Hermitian forms $\omega_I, \omega_J, \omega_K$ of $g$, so that $g$ is simultaneously K\"ahler with respect to these structures. In fact, there are many more K\"ahler structures on $M$.

Together with the identity mapping, $I, J, K$ induce an action of the quaternion algebra $\mathbb{H}$ on the tangent bundle $TM$, which is moreover parallel with respect to $\nabla$. A straightforward verification shows that any linear combination $A = aI + bJ + cK$ with $a^2 + b^2 + c^2 = 1$ satisfies $A^2 = -1$ and $\nabla A = 0$, and is thus an integrable almost complex structure on $M$, for which $g$ is again a K\"ahler metric. Thus, a hyperk\"ahler manifold has a whole 2-sphere of \emph{induced complex structures}
\[
S^2  = \left\{aI + bJ + cK : a^2 + b^2 + c^2 = 1 \right\} = \left\{A \in \mathbb{H} : A^2 = -1\right\} \subseteq \Imag \mathbb{H},
\]
which we would like to encode as a single geometrical object.

\theoremstyle{definition}
\newtheorem{twistor}[hyperk]{Definition}
\begin{twistor}
Let $M$ be hyperk\"ahler. The product manifold $\Tw(M) = M \times S^2$ is called the \emph{twistor space} of $M$.
\end{twistor}

Thinking of $S^2$ as the set of induced complex structures of $M$ as above, the twistor space parametrizes these structures at points of $M$. Identifying $S^2$ with $\mathbb{CP}^1$ in the usual way, we can give $\Tw(M) \cong M \times \mathbb{CP}^1$ a natural complex structure: for any point $(m, A) \in M \times \mathbb{CP}^1$ we define $\mathcal{I} :  T_{(m,A)}\Tw(M) \to T_{(m,A)}\Tw(M)$ by
\[
\begin{array}{ccccc}
\mathcal{I} & : & T_m M \oplus T_A \mathbb{CP}^1 & \longrightarrow & T_m M \oplus T_A \mathbb{CP}^1, \\
&& (X, V) & \longmapsto & \left(AX, I_{\mathbb{CP}^1}V\right)
\end{array}
\]
where $I_{\mathbb{CP}^1} : T\mathbb{CP}^1 \to T\mathbb{CP}^1$ is the usual complex structure on $\mathbb{CP}^1$. It's easy to check that this defines an almost complex structure on $\Tw(M)$, which is in fact integrable \cite{sal}, thus making $\Tw(M)$ into a complex manifold. There are canonical projections
\[
\xymatrix{& \Tw(M) \ar[rd]^\pi \ar[dl]_\sigma & \\ M & & \mathbb{CP}^1,}
\]
the second of which is a holomorphic map. The hyperk\"ahler metric $g$ on $M$ and the Fubini-Study metric $g_{\mathbb{CP}^1}$ on $\mathbb{CP}^1$ induce the Hermitian metric
\[
\sigma^*(g) + \pi^*\left(g_{\mathbb{CP}^1}\right)
\]
on $\Tw(M)$, which, although never K\"ahler \cite{hitchin}, satisfies the weaker property of being \emph{balanced} (see \cite{kaled-verbit}), i.e. its Hermitian form $\omega$ satisfies $d\left(\omega^{n-1}\right) = 0$, where $\dim_{\mathbb{C}} \Tw(M) = n$.

When considering the totality of the induced complex structures on $M$, the particular structures $I, J, K$ no longer play any vital role, so from now on we will denote an arbitrary induced complex structure by $I$, the resulting K\"ahler manifold by $M_I$ and its structure sheaf by $\mathcal{O}_I$. The $M_I$ are precisely the fibres of the holomorphic twistor projection $\pi : \mathrm{Tw}(M) \to \mathbb{CP}^1$, and it will be useful to think of $\Tw(M)$ as the collection of K\"ahler manifolds $M_I$ lying above the points $I \in \mathbb{CP}^1$ via the map $\pi$. A compact hyperk\"ahler manifold $M$ is called \emph{simple} if it is simply connected and satisfies $H^{2,0}(M_I) = \mathbb{C}$ for some (and hence for all) $I \in \mathbb{CP}^1$. In what follows, all our hyperk\"ahler manifolds will be assumed to be compact and simple. 

Recall that a hyperk\"ahler manifold $M$ is equipped with a parallel action of the quaternion algebra $\mathbb{H}$ on its tangent bundle. Restricting to the group of unitary quaternions in $\mathbb{H}$, we get an action of $SU(2)$ on $TM$, hence on all of its tensor bundles, and in particular on the bundle of differential forms $\Lambda^*M$. Since the action is parallel, it commutes with the Laplace operator, and thus preserves harmonic forms. Applying Hodge theory, we get a natural action of $SU(2)$ on the cohomology $H^*(M,\mathbb{C})$.

\theoremstyle{plain}
\newtheorem{invariant}[hyperk]{Lemma}
\begin{invariant} \label{thm:invariant}
A differential form $\eta$ on a hyperk\"ahler manifold $M$ is $SU(2)$-invariant if and only if it is of Hodge type $(p,p)$ with respect to all induced complex structures $M_I$.
\end{invariant}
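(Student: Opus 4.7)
The plan is to reduce to a pointwise statement at each $m \in M$, using the fact that the $\mathbb{H}$-action on $TM$ is parallel and the $SU(2)$-action it induces on $\Lambda^* M$ is fibrewise. At a point $m$, since $SU(2)$ is connected, $SU(2)$-invariance of $\eta_m \in \Lambda^* T_m^* M \otimes \mathbb{C}$ is equivalent to annihilation by the entire Lie algebra $\mathfrak{su}(2) = \Imag \mathbb{H}$, acting as derivations on the exterior algebra via the infinitesimal action. Now, the set of induced complex structures is precisely the unit $2$-sphere inside $\mathfrak{su}(2) \subseteq \mathbb{H}$, and in particular spans $\mathfrak{su}(2)$ as a real vector space; so annihilation by all of $\mathfrak{su}(2)$ is equivalent to annihilation by every induced complex structure $L$, viewed as an element of $\mathfrak{su}(2)$.

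For a fixed induced complex structure $L$, I then invoke the type decomposition $\Lambda^k T_m^* M \otimes \mathbb{C} = \bigoplus_{p+q=k} \Lambda^{p,q}_L$ and observe that $L$, regarded as a derivation on the exterior algebra, acts on $\Lambda^{p,q}_L$ by multiplication by the scalar $\sqrt{-1}(p-q)$. Thus $L \cdot \eta_m = 0$ if and only if the components of $\eta_m$ of type $(p,q)$ vanish for every $p \neq q$, which is to say $\eta_m$ is of Hodge type $(p,p)$ with respect to $L$. Combining the two steps gives both directions of the desired equivalence at each point $m \in M$, and the pointwise equivalence immediately globalizes to differential forms on $M$.

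The only nontrivial ingredient is the eigenvalue computation identifying the action of $L$ with the scalar $\sqrt{-1}(p-q)$ on $\Lambda^{p,q}_L$. This is a standard Weil-operator calculation: by the Leibniz rule for derivations on the exterior algebra it reduces to the case of $1$-forms, where the dual action of $L^*$ on $T_m^* M \otimes \mathbb{C}$ has eigenvalues $\pm \sqrt{-1}$ precisely on the $(1,0)_L$ and $(0,1)_L$ eigenspaces. The main, and still minor, obstacle is to keep sign conventions consistent between the action of $L$ as a complex structure on $TM$ and its action as a derivation on the exterior algebra of $T^*M$; once these are pinned down the argument is formal linear algebra on a single cotangent fibre.
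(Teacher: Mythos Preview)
Your argument is correct. The paper does not actually prove this lemma; it simply cites Proposition~1.2 in \cite{verbit1}. Your approach---reducing pointwise, using connectedness of $SU(2)$ to pass to the Lie algebra $\mathfrak{su}(2) = \Imag\mathbb{H}$, and identifying the derivation action of each induced complex structure $L$ on $\Lambda^{p,q}_L$ with the scalar $\sqrt{-1}(p-q)$---is precisely the standard argument that appears in the cited reference, so you have in effect supplied the proof the paper omits. Your caveat about sign conventions is well placed but harmless: whether the eigenvalue comes out as $\sqrt{-1}(p-q)$ or $\sqrt{-1}(q-p)$, the kernel is $\bigoplus_p \Lambda^{p,p}_L$ either way.
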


\begin{proof}
Proposition 1.2 in \cite{verbit1}.
\end{proof}

\theoremstyle{definition}
\newtheorem{generic}[hyperk]{Definition}
\begin{generic}
Let $M$ be hyperk\"ahler and $I$ an induced complex structure. We say that $I$ is \emph{generic} with respect to the hyperk\"ahler structure on $M$ if all elements in
\[
\bigoplus_p H^{p,p}(M_I) \cap H^{2p}(M,\mathbb{Z}) \subset H^*(M,\mathbb{C})
\]
are $SU(2)$-invariant.
\end{generic}

This terminology is justified: most induced complex structures are generic, in a sense made precise in the following proposition.

\theoremstyle{plain}
\newtheorem{dense}[hyperk]{Proposition}
\begin{dense} \label{thm:dense}
Let $M$ be a hyperk\"ahler manifold. The set $S_0 \subset S^2$ of generic induced complex structures is dense in $S^2$ and its complement is countable.
\end{dense}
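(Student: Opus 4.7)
The plan is to express $S^2 \setminus S_0$ as a countable union of finite subsets of $\mathbb{CP}^1 \cong S^2$, and then read off both assertions from that. By the definition of genericity, $I \in S^2 \setminus S_0$ iff some non-$SU(2)$-invariant integral class is of pure Hodge type $(p,p)$ with respect to $M_I$, so
\[
S^2 \setminus S_0 \;=\; \bigcup_{p,\,\eta} T_\eta, \qquad T_\eta := \{I \in S^2 : \eta \in H^{p,p}(M_I)\},
\]
where the inner union ranges over those $\eta \in H^{2p}(M,\mathbb{Z})$ that are not $SU(2)$-invariant. Since $M$ is compact, each $H^{2p}(M,\mathbb{Z})$ is finitely generated and hence countable, so the union above is countable.

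Next, fix a non-invariant $\eta$. If $T_\eta = S^2$, then $\eta$ is of Hodge type $(p,p)$ with respect to \emph{every} induced complex structure, and Lemma \ref{thm:invariant} would force $\eta$ to be $SU(2)$-invariant, contradicting the assumption; hence $T_\eta \subsetneq S^2$. For finiteness, use that $S^2 \cong \mathbb{CP}^1$ is the base of the holomorphic projection $\pi$, that $R^{2p}\pi_*\mathbb{C}$ is a local system (since $\pi$ is proper and smooth), and that the Hodge filtration $F^p \subset R^{2p}\pi_*\mathbb{C} \otimes \mathcal{O}_{\mathbb{CP}^1}$ is a holomorphic subbundle (the variation of Hodge structures carried by the twistor family). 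For a real class $\eta$ the relation $\overline\eta = \eta$ shows that $\eta \in F^p$ already implies $\eta \in \overline{F^p}$, so the condition $\eta \in H^{p,p}(M_I) = F^p \cap \overline{F^p}$ reduces to the single holomorphic condition $\eta \in F^p$. Hence $T_\eta$ is the zero locus on $\mathbb{CP}^1$ of the image of $\eta$ in the holomorphic quotient bundle $(R^{2p}\pi_*\mathbb{C} \otimes \mathcal{O}_{\mathbb{CP}^1})/F^p$; such a section of a holomorphic bundle on $\mathbb{CP}^1$ either vanishes identically (ruled out above) or on a finite set, so $T_\eta$ is finite.

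Combining the steps, $S^2 \setminus S_0$ is a countable union of finite sets and is therefore itself countable. Every non-empty open subset of $S^2$ is uncountable (since $S^2$ is a topological 2-manifold), so the countable set $S^2 \setminus S_0$ contains no non-empty open subset, and $S_0$ is dense in $S^2$.

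The main technical point is the finiteness of each $T_\eta$. One needs both the holomorphicity of the relative Hodge filtration over the twistor $\mathbb{CP}^1$ and the collapse of the $(p,p)$-condition to the single holomorphic condition $\eta \in F^p$ for real $\eta$; together these reduce an \emph{a priori} real-analytic locus on $S^2$ to the zero set of a holomorphic section on a compact Riemann surface, which is automatically finite whenever it is not all of $\mathbb{CP}^1$.
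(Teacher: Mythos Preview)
Your argument is correct. The paper itself does not prove this proposition but only cites Verbitsky (Proposition~2.2 in \cite{verbit2}), so the comparison is really with Verbitsky's treatment. There the finiteness of each $T_\eta$ is obtained more concretely from the $SU(2)$ representation theory on $H^*(M,\mathbb{C})$: the Hodge type with respect to $I$ is read off from the $SU(2)$-weight decomposition, and for a non-invariant class the condition of being of pure type $(p,p)$ is an explicit algebraic equation on $I\in S^2\cong\mathbb{CP}^1$ with finitely many solutions (the paper itself invokes this in Section~\ref{sect:teleman} when analysing $\Pic_{\mathbb{CP}^1}\Tw(M)$). Your route via the variation of Hodge structure on the twistor family is more abstract but equally valid: it trades the explicit weight computation for the general fact that the Hodge filtration on $R^{2p}\pi_*\mathbb{C}\otimes\mathcal{O}_{\mathbb{CP}^1}$ varies holomorphically, reducing $T_\eta$ to the zero set of a holomorphic section of a vector bundle on $\mathbb{CP}^1$.

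One point worth making explicit: the holomorphicity of the relative Hodge filtration here cannot be deduced from the usual statement for projective or K\"ahler families, since $\Tw(M)$ is never K\"ahler. What you need is the version for smooth proper holomorphic maps whose fibres are compact K\"ahler with locally constant Hodge numbers (Griffiths--Deligne); the constancy of $h^{p,q}(M_I)$ follows from upper-semicontinuity together with $\sum_{p+q=k}h^{p,q}=b_k$. Also, your appeal to Lemma~\ref{thm:invariant} to exclude $T_\eta=S^2$ is stated for differential forms, so strictly speaking you pass through the harmonic representative (which is independent of $I$ since the hyperk\"ahler metric is fixed) to transfer the statement to cohomology classes. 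Both gaps are standard, but you may want to spell them out.
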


\begin{proof}
Proposition 2.2 in \cite{verbit2}.
\end{proof}

As we will see, the genericity of the complex structure $I$ puts rigid conditions on the geometric structure of the manifold $M_I$. For instance, all line bundles on $M_I$ have only zero or nowhere vanishing sections (see the proof of Proposition \ref{thm:divisors}), hence $M_I$ can never be algebraic since it has no effective divisors.

Our next goal is to give the definition of stable vector bundles (and, more generally, torsion-free coherent sheaves), as well as Hermitian-Einstein metrics and the Kobayashi-Hitchin correspondence. In what follows, $M$ will be a compact complex manifold of dimension $n$ and $g$ a Hermitian metric on $M$ with Hermitian form $\omega$. We will denote by $\mathcal{O}$ the sheaf of holomorphic functions on $M$.

Recall that for a coherent sheaf $\mathcal{F}$ over $M$, we have the \emph{dual sheaf} $\mathcal{F}^* = \mathcal{H}om\left(\mathcal{F}, \mathcal{O}\right)$, as well as a natural morphism into the double dual $\mathcal{F} \longrightarrow \mathcal{F}^{**}$, whose kernel consists precisely of the torsion elements of $\mathcal{F}$.

\theoremstyle{definition}
\newtheorem{torsionfree}[hyperk]{Definition}
\begin{torsionfree}
A coherent sheaf $\mathcal{F}$ over $M$ is called \emph{torsion-free} if $\forall x \in M$, the stalk $\mathcal{F}_x$ is a torsion-free $\mathcal{O}_x$-module, or equivalently, if the natural morphism of sheaves
\[
\mathcal{F} \longrightarrow \mathcal{F}^{**}
\]
is injective. If it is an isomorphism, we say that $\mathcal{F}$ is \emph{reflexive}. We call the sheaf $\mathcal{F}$ \emph{normal} if for every open set $U \subseteq M$ and every analytic subset $A \subset U$ of codimension at least 2, the restriction map
\[
\mathcal{F}(U) \longrightarrow \mathcal{F}(U \setminus A)
\]
is an isomorphism.
\end{torsionfree}

Clearly, a vector bundle $E$, viewed as a locally free sheaf, is reflexive (and hence torsion-free). On the other hand, for an arbitrary coherent sheaf $\mathcal{F}$, let
\[
S(\mathcal{F}) = \left\{x \in M : \mathcal{F}_x \textrm{ is not free over } \mathcal{O}_x\right\}
\]
denote the \emph{singularity set} of $\mathcal{F}$. It can be shown (see Section $\S1$ of Chapter 2 in \cite{okonek}) that for an arbitrary coherent sheaf this is a closed analytic subset of $M$ of codimension $\ge 1$ ($\ge 2$ for a torsion-free sheaf, $\ge 3$ for a reflexive sheaf), so that $\mathcal{F}$ restricted to $M \setminus S(\mathcal{F})$ is locally free. This justifies the following definition.

\theoremstyle{definition}
\newtheorem{rank}[hyperk]{Definition}
\begin{rank}
The \emph{rank} $\rk \mathcal{F}$ of a coherent sheaf $\mathcal{F}$ over $M$ is the rank of the locally free sheaf
\[
\left.\mathcal{F}\right|_{M \setminus S(\mathcal{F})} \textrm{ over } M \setminus S(\mathcal{F}).
\]
\end{rank}

Recall that for an arbitrary coherent sheaf $\mathcal{F}$ and any integer $s \ge 0$, we can define the exterior power sheaf $\Lambda^s \mathcal{F}$. If $\mathcal{F}$ is a torsion-free sheaf of rank $s > 0$, then $\Lambda^s \mathcal{F}$ has rank 1, and the \emph{determinant} of $\mathcal{F}$,
\[
\det \mathcal{F} := \left(\Lambda^s \mathcal{F}\right)^{**},
\]
is actually a line bundle on $M$, since the dual of an arbitrary coherent sheaf is reflexive (Proposition V.5.18 in \cite{kobayashi}), and a reflexive sheaf of rank 1 is a line bundle (Lemma 1.1.15 in Chapter 2 of \cite{okonek}).

To proceed with the definition of degree, we need to impose a certain differential condition on the metic $g$.

\theoremstyle{definition}
\newtheorem{gauduchon}[hyperk]{Definition}
\begin{gauduchon}
The metric $g$ is called \emph{Gauduchon} if it satisfies the condition
\[
\partial \bar{\partial} \left(\omega^{n-1}\right) = 0.
\]
\end{gauduchon}

\theoremstyle{definition}
\newtheorem{degree}[hyperk]{Definition}
\begin{degree}
Let $g$ be Gauduchon. The \emph{degree} of a torsion-free coherent sheaf $\mathcal{F}$ on $M$ with respect to $g$ is given by
\[
\deg_g (\mathcal{F}) =\int_M c_1(\det \mathcal{F}, h) \wedge \omega^{n-1},
\]
where $h$ is an arbitrary Hermitian metric on the line bundle $\det \mathcal{F}$, and
\[
c_1(\det \mathcal{F}, h) := \frac{\sqrt{-1}}{2\pi} R^h,
\]
where $R^h \in \Gamma(\Lambda^{1,1}M)$ is the curvature form of the Chern connection on $\left(\det\mathcal{F}, h\right)$. We will write $\deg(\mathcal{F})$ when the metric will be clear from the context.
\end{degree}

The Gauduchon condition on $g$ ensures that $\deg_g(\mathcal{F})$ is well-defined and does not depend on the metric $h$ (see Lemma 1.1.18 in \cite{lubke-teleman}). If the metric $g$ satisfies the balancedness condition $d\left(\omega^{n-1}\right) = 0$ (in particular, if it is K\"ahler), then it is clearly Gauduchon, and in fact the degree only depends on the first Chern class $c_1(\det \mathcal{F})$, making it a topological invariant of $\det \mathcal{F}$; for an arbitrary Gauduchon metric it is only a holomorphic invariant of $\det \mathcal{F}$. Note that, for an arbitrary Hermitian metric $g$ on $M$, the definition of degree does not make sense, however, as shown in the following theorem proved in \cite{gauduchon}, the conformal class of $g$ always contains a Gauduchon metric, which is essentially unique.

\theoremstyle{plain}
\newtheorem{gaud}[hyperk]{Theorem}
\begin{gaud}
If $M$ is compact, then for every Hermitian metric $g$ on $M$ there exists a positive function $\varphi \in C^{\infty}(M, \mathbb{R}^{>0})$ such that
\[
g' := \varphi \cdot g
\]
is Gauduchon. If $M$ is connected and $n \ge 2$, then $g'$ is unique up to a positive constant.
\end{gaud}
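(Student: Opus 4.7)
The plan is to reduce the existence question to a linear elliptic PDE on $M$ and invoke Fredholm theory together with a Krein-Rutman / maximum principle argument. Writing the new Hermitian form as $\omega' = \varphi\omega$, one has $(\omega')^{n-1} = \varphi^{n-1}\omega^{n-1}$; setting $u := \varphi^{n-1}$, the Gauduchon condition $\partial\bar{\partial}((\omega')^{n-1})=0$ becomes the \emph{linear} equation $\partial\bar{\partial}(u\,\omega^{n-1}) = 0$, and $\varphi \mapsto u$ is a bijection of smooth positive functions. It therefore suffices to find a positive $u$ in the kernel of the operator $P : C^\infty(M) \to C^\infty(M)$ characterized by $i\partial\bar{\partial}(u\omega^{n-1}) = (Pu)\cdot \omega^n/n!$.

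The first step is a direct computation: expanding $\partial\bar{\partial}(u\omega^{n-1})$ by the Leibniz rule, the top-order piece $i\partial\bar{\partial} u \wedge \omega^{n-1}$ exhibits $P$ as a second-order linear elliptic operator with the same principal symbol as the Laplacian $\Delta_g$ (up to a positive constant). Next, computing the formal $L^2$-adjoint $P^*$ relative to the volume form $\omega^n/n!$, Stokes' theorem applied to the exact form $\partial\bar{\partial}(u\omega^{n-1})$ gives $\int_M Pu\cdot\omega^n/n! = 0$ for every $u$, whence $P^*(1) = 0$ --- that is, $P^*$ has vanishing zeroth-order coefficient. The strong maximum principle then forces every element of $\ker P^*$ to be constant on the connected compact manifold $M$, giving $\dim \ker P^* = 1$. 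Deforming $P$ through elliptic operators of the same principal symbol to $\Delta_g$ shows it has index zero, hence $\dim \ker P = \dim \ker P^* = 1$.

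To exhibit a positive generator of $\ker P$, I would choose $\lambda > 0$ large enough that $P + \lambda\cdot\mathrm{id}$ is invertible, and verify that its inverse, viewed as a compact operator on $C^0(M)$, is positivity-improving via a maximum principle argument applied to $P + \lambda$. The Krein-Rutman theorem then yields a simple positive eigenvalue of $(P+\lambda)^{-1}$ with a positive eigenfunction $u_0$; since $\dim \ker P = 1$, this eigenfunction must in fact satisfy $Pu_0 = 0$, and $\varphi := u_0^{1/(n-1)}$ produces the desired Gauduchon metric. Uniqueness of $\varphi$ up to a positive constant is immediate from the one-dimensionality of $\ker P$. The main obstacle is exactly this final positivity step: the strong maximum principle does not apply to $P$ itself (we have no control on the sign of its zeroth-order coefficient), so one must detour through $P^*$ to control dimensions, then carefully verify the positivity-improving property of $(P+\lambda)^{-1}$ before appealing to Krein-Rutman. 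The full argument is carried out in \cite{gauduchon}.
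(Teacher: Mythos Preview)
The paper does not actually prove this theorem: it simply states the result and cites Gauduchon's original article \cite{gauduchon}. Your sketch is essentially the standard outline of Gauduchon's own argument (linearize via $u=\varphi^{n-1}$, identify the resulting second-order elliptic operator $P$, use $P^*(1)=0$ and the maximum principle to get $\dim\ker P^*=1$, index zero to get $\dim\ker P=1$, then Krein--Rutman for a positive generator), so there is nothing to compare --- you have supplied more detail than the paper does, and what you wrote is correct as a proof plan, including your honest flagging of the positivity step as the delicate point.
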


We are now ready to define stability of torsion-free coherent sheaves on $M$.

\theoremstyle{definition}
\newtheorem{stability}[hyperk]{Definition}
\begin{stability}
Let $g$ be a Gauduchon metric on $M$, and let $\mathcal{F}$ be a nontrivial torsion-free coherent sheaf. The $g$-\emph{slope} of $\mathcal{F}$ is given by
\[
\mu_g(\mathcal{F}) := \frac{\deg_g(\mathcal{F})}{\rk(\mathcal{F})},
\]
denoted simply by $\mu(\mathcal{F})$ when the metric is clear from the context. The sheaf $\mathcal{F}$ is called $g$-\emph{stable} (resp. $g$-\emph{semi-stable}) if for every subsheaf $\mathcal{G} \subset \mathcal{F}$ with $0 < \rk(\mathcal{G}) < \rk(\mathcal{F})$ we have
\[
\mu_g(\mathcal{G}) < \mu_g(\mathcal{F}) \ \left(\textrm{resp. } \mu_g(\mathcal{G}) \le \mu_g(\mathcal{F})\right).
\]
$\mathcal{F}$ is called $g$-\emph{polystable} if it is a direct sum of $g$-stable bundles of the same slope. It is called \emph{irreducible} if it has no proper subsheaves of lower rank.
\end{stability}

It's clear that an irreducible $\mathcal{F}$ is stable with respect to any metric on $M$. It's also clear that for a reflexive sheaf, $\mathcal{F}$ is irreducible if and only $\mathcal{F}^*$ is irreducible.

We now give the definition of Hermitian-Einstein structures on a holomorphic vector bundle $E$ over $M$, a concept which is intimately related to the notion of stability, in a sense that will be made precise later. Recall that the Hermitian structure $g$ on $M$ defines a linear operator on the bundle of differential forms of $M$ given by exterior multiplication with the Hermitian form $\omega$:
\[
\begin{array}{ccccc}
L_g & : & \Lambda^{p,q} M & \longrightarrow & \Lambda^{p+1,q+1}M \\
&& \alpha & \longmapsto & \alpha \wedge \omega
\end{array}
\]
We will denote the $g$-adjoint operator of $L_g$ by $\Lambda_g : \Lambda^{p,q}M \to \Lambda^{p-1,q-1}M$. It can be shown that for a $(1,1)$-form $\alpha$, $\Lambda_g(\alpha)$ satisfies the following identity:
\[
\alpha \wedge \omega^{n-1} = \frac{1}{n} \, \Lambda_g(\alpha) \omega^n.
\]

\newtheorem{hermit-einstein}[hyperk]{Definition}
\begin{hermit-einstein}
A Hermitian metric $h$ on a holomorphic vector bundle $E$ on $M$ is called $g$-\emph{Hermitian-Einstein} if the curvature $R^h \in \Gamma(\Lambda^{1,1}M \otimes E^* \otimes E)$ of its Chern connection satisfies the equation
\[
\sqrt{-1}\Lambda_g R^h = \gamma \cdot \id_E,
\]
or equivalently
\[
\left(\sqrt{-1}R^h\right) \wedge \omega^{n-1} = \frac{\gamma}{n} \omega^n \cdot \id_E,
\]
where $\gamma$ is a real constant, called the \emph{Einstein constant} of $h$.
\end{hermit-einstein}

If the holomorphic vector bundle $E$ is \emph{simple}, in the sense that $\Hom(E, E) = \mathbb{C}$, a $g$-Hermitian-Einstein metric on $E$ (if it exists) is unique up to a positive scalar (Proposition 2.2.2 in \cite{lubke-teleman}). In case the metric $g$ is Gauduchon, the Einstein constant is proportional to the degree of the vector bundle $E$, as the following proposition shows.

\theoremstyle{plain}
\newtheorem{einst-slope}[hyperk]{Proposition}
\begin{einst-slope} \label{thm:einst-slope}
If $g$ is Gauduchon and $h$ a Hermitian-Einstein metric on $E$ with Einstein constant $\gamma$, then
\[
\gamma = \frac{2\pi}{(n-1)! \cdot \Vol_g(M)} \cdot \mu_g(E),
\]
where
\[
\Vol_g(M) = \int_M \frac{1}{n!} \cdot \omega^n
\]
is the volume of $M$ with respect to the metric $g$.
\end{einst-slope}

\begin{proof}
Lemma 2.1.8 in \cite{lubke-teleman}.
\end{proof}

There is an intimate relationship between Hermitian-Einstein structures and stability. The following fundamental theorem, whose proof is the subject of the book \cite{lubke-teleman}, shows that the two notions are essentially equivalent.

\theoremstyle{plain}
\newtheorem{kobayashi-hitchin}[hyperk]{Theorem}
\begin{kobayashi-hitchin} \label{thm:kobayashi-hitchin}
If $g$ is a Gauduchon metric and $E$ is a holomorphic vector bundle on $M$, then $E$ admits a Hermitian-Einstein metric if and only if it is polystable. In case the bundle is stable, this metric is unique up to a positive constant.
\end{kobayashi-hitchin}

This result is called the \emph{Kobayashi-Hitchin correspondence}, after the people who conjectured it, and was proved in increasing generality by various mathematicians, among whom the greatest contributions were by Donaldson \cite{donaldson, donaldson2, donaldson3}, Uhlenbeck and Yau \cite{uhlenbeck-yau, uhlenbeck-yau2}, Buchdahl \cite{buchdahl} and Li and Yau \cite{li-yau}.

As a first application of this theorem, we see that a holomorphic line bundle $L$ on a Gauduchon manifold $M$, which is clearly stable in any metric, always admits a Hermitian-Einstein metric. As a matter of fact, it can be shown (Corollary 2.1.6 in \cite{lubke-teleman}) that for an arbitrary Hermitian metric $g$ on $M$, a holomorphic line bundle $L$ on $M$ always admits a $g$-Hermitian-Einstein metric $h$, unique up to constant rescaling. Thus, we can define the Einstein constant of $L$ with respect to the Hermitian metric $g$ by
\[
\gamma_g(L) := \sqrt{-1}\Lambda_g R^h,
\]
where $R^h$ is the curvature of the Chern connection of $(L, h)$. Since the Chern connection stays the same when the metric is multiplied by a constant, $\gamma_g(L)$ is well-defined. We then have the following result.

\theoremstyle{plain}
\newtheorem{degree-proportional}[hyperk]{Proposition}
\begin{degree-proportional} \label{thm:degree-proportional}
Let $g'$ be a Gauduchon metric in the conformal class of $g$. Then there exists a positive constant $c$, depending only on $g$ and $g'$, such that
\[
\deg_{g'}(L) = c \cdot \gamma_g(L)
\]
for all holomorphic line bundles $L$ on $M$.
\end{degree-proportional}

\begin{proof} Proposition 1.3.16 in \cite{lubke-teleman}.
\end{proof}

Now let $M$ be again a compact hyperk\"ahler manifold with hyperk\"ahler metric $g$, and let $S_0 \subset S^2 \cong \mathbb{CP}^1$ denote the set of generic complex structures of $M$, as in the statement of Proposition \ref{thm:dense}. We have the following lemma.

\theoremstyle{plain}
\newtheorem{degr}[hyperk]{Lemma}
\begin{degr} \label{thm:degr}
An $SU(2)$-invariant 2-form $\beta$ on a hyperk\"ahler manifold $M$ satisifes
\[
\Lambda_I \beta = 0
\]
for any induced complex structure $I$, where by $\Lambda_I$ we mean the operator $\Lambda_g$ on the manifold $M_I$.
\end{degr}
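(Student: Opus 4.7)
The plan is to reduce the statement to pointwise orthogonality of $\beta$ and $\omega_I$ with respect to $g$, and then exploit the action of the specific element $J \in SU(2)$ on $\Lambda^2 T^*M$ to force this orthogonality.

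First I would apply Lemma \ref{thm:invariant} to the $SU(2)$-invariant form $\beta$ to conclude that $\beta$ is of Hodge type $(1,1)$ with respect to $I$ (and indeed with respect to every induced complex structure). Since on $(1,1)$-forms the operator $\Lambda_I$ coincides, up to a positive multiplicative constant, with the pointwise $g$-inner product with $\omega_I$, this reduces the claim to showing that $\langle \beta, \omega_I \rangle_g$ vanishes at every point of $M$.

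Working at a fixed point, I would then analyze how the quaternionic action of $J$ on $T_m M$ transforms the two forms $\omega_I$ and $\beta$. A short computation using $IJ = K$, $JK = I$, and the $J$-invariance of $g$ yields
\[
\omega_I(Jv, Jw) = g(IJv, Jw) = g(Kv, Jw) = -g(JKv, w) = -\omega_I(v, w),
\]
so the induced action of $J$ on $\Lambda^2 T^*M$ sends $\omega_I$ to $-\omega_I$. On the other hand, $SU(2)$-invariance of $\beta$ means in particular that $J$ fixes $\beta$. Since $J$ is a $g$-isometry, it preserves the induced pointwise inner product on $\Lambda^2 T^*M$, and so
\[
\langle \beta, \omega_I \rangle_g = \langle J\cdot\beta,\, J\cdot\omega_I \rangle_g = -\langle \beta, \omega_I \rangle_g,
\]
forcing $\langle \beta, \omega_I \rangle_g$, and hence $\Lambda_I \beta$, to vanish.

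The only delicate point will be matching sign conventions for the pointwise $SU(2)$-action on $\Lambda^2 T^*M$ with those implicit in Lemma \ref{thm:invariant}; this is pure bookkeeping rather than a substantive difficulty. Conceptually, the calculation reflects the fact that $\omega_I, \omega_J, \omega_K$ span a three-dimensional $SU(2)$-subrepresentation of $\Lambda^2 T^*M$ isomorphic to the adjoint representation, which carries no invariants, so any $SU(2)$-invariant 2-form is automatically $g$-orthogonal to every $\omega_A$ with $A \in S^2$; the explicit calculation with $J$ above is one concrete manifestation of this.
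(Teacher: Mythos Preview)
Your argument is correct. The computation $\omega_I(Jv,Jw)=-\omega_I(v,w)$ is right, and combining it with $J$-invariance of $\beta$ and the fact that $J$ acts isometrically on $\Lambda^2 T^*M$ immediately gives $\langle \beta,\omega_I\rangle_g=0$, hence $\Lambda_I\beta=0$. (Strictly speaking you do not even need to invoke Lemma~\ref{thm:invariant}: the identity $\Lambda_I\alpha=\langle\alpha,\omega_I\rangle_g$ holds for every $2$-form $\alpha$, and the pairing with $\omega_I$ automatically ignores the non-$(1,1)$ components. But citing the lemma does no harm.)

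As for comparison with the paper: the paper does not give a proof at all---it simply cites Lemma~2.1 of \cite{verbit1}. Your argument is therefore not ``the same approach'' but rather an explicit, self-contained proof where the paper chose to defer to the literature. The representation-theoretic remark you close with (that $\omega_I,\omega_J,\omega_K$ span a copy of the adjoint representation, which has no invariants) is essentially the argument one finds in Verbitsky's paper, so in substance you have reconstructed the cited proof.
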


\begin{proof}
Lemma 2.1 in in \cite{verbit1}.
\end{proof}

\theoremstyle{plain}
\newtheorem{divisors}[hyperk]{Proposition}
\begin{divisors} \label{thm:divisors}
Let $M$ be a hyperk\"ahler manifold and $\Tw(M)$ its twistor space. The holomorphic twistor projection $\pi : \Tw(M) \to \mathbb{CP}^1$ establishes a one-to-one correspondence between divisors on $\mathbb{CP}^1$ and those on $\Tw(M)$.
\end{divisors}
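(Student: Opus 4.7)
The plan is to set up the pullback map $\pi^{*} : \mathrm{Div}(\mathbb{CP}^1) \to \mathrm{Div}(\Tw(M))$, sending $\{I\} \mapsto \pi^{-1}(I) = M_I$ and extended $\mathbb{Z}$-linearly, and to show that it is a bijection. Injectivity is immediate: the fibres $M_I$ are pairwise distinct irreducible analytic hypersurfaces of $\Tw(M)$, so distinct divisors on $\mathbb{CP}^1$ pull back to distinct divisors on $\Tw(M)$. The content of the proof is surjectivity, which reduces to the claim that \emph{every irreducible analytic hypersurface $D \subset \Tw(M)$ coincides with some fibre $M_I$}. To establish this, I would examine $\pi(D) \subseteq \mathbb{CP}^1$. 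Being the image of an irreducible analytic set under a holomorphic map, $\pi(D)$ is irreducible, hence either a single point or the whole of $\mathbb{CP}^1$. In the first case $\pi(D) = \{I\}$, so $D \subseteq M_I$; since $\dim_{\mathbb{C}} D = n - 1 = \dim_{\mathbb{C}} M_I$ and both are irreducible, $D = M_I$. What remains is to rule out $\pi(D) = \mathbb{CP}^1$.

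Assume for contradiction that $\pi(D) = \mathbb{CP}^1$. Then $D \cap M_I$ is non-empty for every $I$, and since $D$ is not contained in any $M_I$, the local defining equation of $D$ restricts non-trivially to each fibre, giving a non-trivial effective divisor $D_I := D \cap M_I$ on the K\"ahler manifold $M_I$. I would then choose a generic $I \in S_0$, which is possible by Proposition \ref{thm:dense}. The first Chern class of $\mathcal{O}_{M_I}(D_I)$ lies in $H^{1,1}(M_I) \cap H^{2}(M, \mathbb{Z})$, and by definition of $S_0$ it is $SU(2)$-invariant. The $SU(2)$-action on $\Lambda^{*}M$ commutes with the hyperk\"ahler Laplacian, so the harmonic representative $\alpha$ of $c_1(\mathcal{O}_{M_I}(D_I))$ is itself $SU(2)$-invariant, and Lemma \ref{thm:degr} gives $\Lambda_I \alpha = 0$. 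Applying the identity $\alpha \wedge \omega_I^{n-2} = \tfrac{1}{n-1}\Lambda_I(\alpha)\,\omega_I^{n-1}$ on $M_I$ and integrating yields $\deg_{g_I}\bigl(\mathcal{O}_{M_I}(D_I)\bigr) = 0$. On the other hand, by the Poincar\'e-Lelong formula and Wirtinger's theorem on the K\"ahler manifold $M_I$, $\deg_{g_I}\bigl(\mathcal{O}_{M_I}(D_I)\bigr) = \int_{D_I} \omega_I^{n-2} > 0$, a contradiction.

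The main obstacle is this final step: turning the topological $SU(2)$-invariance of $c_1$ on a generic fibre into vanishing of the K\"ahler degree there, and then playing this off against Wirtinger positivity for effective divisors. All the ingredients (harmonic theory on the hyperk\"ahler manifold, Lemma \ref{thm:degr}, and the K\"ahler structure on each $M_I$) are already in place, so the argument assembles existing tools rather than introducing new machinery.
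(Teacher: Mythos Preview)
Your proposal is correct and follows essentially the same route as the paper's proof: reduce to showing that no irreducible hypersurface of $\Tw(M)$ surjects onto $\mathbb{CP}^1$, restrict to a generic fibre $M_I$ with $I\in S_0$, use $SU(2)$-invariance of $c_1$ together with Lemma~\ref{thm:degr} to force the degree of the associated line bundle to vanish, and then contradict effectivity. The only cosmetic differences are that the paper passes through a Hermitian--Einstein metric (invoking Kobayashi's realization of the harmonic $c_1$-form as $c_1(L,h)$) and concludes via the ``trivial or no sections'' dichotomy from Poincar\'e--Lelong, whereas you compute the degree directly with the harmonic representative (legitimate since $M_I$ is K\"ahler and the degree is topological) and conclude via Wirtinger positivity; both are equivalent packagings of the same contradiction.
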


\begin{proof}
It suffices to show that the only (irreducible) hypersurfaces on $\Tw(M)$ are the fibres of the twistor projection $\pi : \Tw(M) \to \mathbb{CP}^1$. Suppose this is not so, and $V \subset \Tw(M)$ is an irreducible hypersurface which is not a fibre of $\pi$. Then $\pi(V) = \mathbb{CP}^1$, so that $V$ intersects every fibre of $\pi$. We can choose a generic structure $I \in S_0$ so that the restriction $V \cap \pi^{-1}(I) = V \cap M_I$ is a divisor on $M_I$. Letting $L$ be the line bundle corresponding to this divisor, the first Chern class $c_1(L) \in H^{1,1}(M_I) \cap H^2(M, \mathbb{Z})$ is $SU(2)$-invariant by genericity of $I$. Letting $\eta$ denote the harmonic form representing $c_1(L)$, it's clear that $\eta$ is $SU(2)$-invariant as a differential form. By Proposition II.2.23 in \cite{kobayashi}, there is a Hermitian metric $h$ on $L$ such that  $c_1(L, h) = \eta$; in other words,
\[
\frac{\sqrt{-1}}{2\pi}R^h = \eta,
\]
where $R^h$ is the curvature of the Chern connection of $(L, h)$. Since $R^h$ is $SU(2)$-invariant, it follows from Lemma \ref{thm:degr} that $(L, h)$ is Hermitian-Einstein with Einstein constant 0, so by Proposition \ref{thm:einst-slope} (or Proposition \ref{thm:degree-proportional}), $\deg_g(L) = 0$. But then, as a consequence of the Poincar\'e-Lelong formula (see Proposition 1.3.5 in \cite{lubke-teleman}), $L$ is either trivial or $H^0(M_I, L) = 0$, which contradicts the construction of $L$ as the line bundle of an effective divisor on $M_I$. In fact, the argument shows that $M_I$ has no effective divisors and thus cannot be algebraic. In particular, $V \subseteq \Tw(M)$ as chosen above cannot exist.
\end{proof}

Just as shown in the proof of the above proposition for line bundles, it follows from Lemma \ref{thm:degr} that for a generic complex structure $I \in S_0$, any holomorphic vector bundle over $M_I$, and in fact any torsion-free sheaf, has degree 0, and in particular, is semi-stable.

We close this section with one further notion of stability which is defined for a hyperk\"ahler manifold $M$. Recall that $M$ comes equipped with a twistor space $\Tw(M)$ and a holomorphic projection $\pi : \Tw(M) \to \mathbb{CP}^1$. As mentioned previously, $\Tw(M)$ has a natural balanced metric, thus it makes sense to talk about stable vector bundles and torsion-free sheaves on $\Tw(M)$. In fact, because $\Tw(M)$ parametrizes the totality of the complex structures on $M$ induced by the hyperk\"ahler structure, it makes sense to make the following definition.

\theoremstyle{definition}
\newtheorem{fibrewise}[hyperk]{Definition}
\begin{fibrewise}
A holomorphic vector bundle $E$ on $\Tw(M)$ is called \emph{fibrewise stable} if its restriction $E_I$ to $M_I$ is stable in the induced K\"ahler metric for each $I \in \mathbb{CP}^1$. $E$ is called \emph{generically fibrewise stable} if $E_I$ is stable for all $I$ in a nonempty Zariski open subset of $\mathbb{CP}^1$.
\end{fibrewise}

%%%%%%%%% Zariski openness of fibrewise stability for the twistor projection %%%%%%%%%
\section{Zariski openness of fibrewise stability for the twistor projection} \label{sect:teleman}

It is a general fact, which can be made precise, that given a morphism of spaces $f : X \to S$ and a vector bundle $E$ on $X$, thought of as a family of vector bundles $\left\{E_s : s \in S\right\}$ on the fibres $\left\{f^{-1}(s) : s \in S\right\}$, the set
\[
S^{\mathrm{st}} := \left\{ s \in S : E_s \textrm{ is stable} \right\}
\]
is open in $S$ under some assumptions on the morphism $f : X \to S$. This holds true in the projective algebraic setting (see Proposition 2.3.1 in \cite{huyb-lehn}), where the topology on $S$ is understood to be the Zariski topology, as well as in the complex hermitian setting (see \cite{lubke-teleman}, Theorem 5.1.1), where the topology on $S$ is the usual Euclidean manifold topology. We would like to study families of vector bundles on the fibres of the twistor projection $\pi : \Tw(M) \to \mathbb{CP}^1$ of a simple hyperk\"ahler manifold $M$, and while it is natural to work in the Zariski topology on $\mathbb{CP}^1$, the twistor space $\Tw(M)$ is never projective (not even K\"ahler). We could apply the result of \cite{lubke-teleman} and conclude that the stability condition is open in $\mathbb{CP}^1$ in the classical topology, but for our purposes we would like to have the stronger result of Zariski openness.

In the paper \cite{teleman}, Teleman proves, among other results, the following theorem.

\theoremstyle{plain}
\newtheorem{stable-teleman}{Theorem}[section]
\begin{stable-teleman}
Let $Y$ be a compact connected Gauduchon manifold, $S$ an arbitrary complex manifold, and $E$ a holomorphic vector bundle on $Y \times S$, thought of as a family of vector bundles on $Y$ parametrized by the projection $Y \times S \to S$. Then the sets
\[
S^{\mathrm{st}} = \left\{ s \in S : E_s \textrm{ is stable} \right\}, \, S^{\mathrm{sst}} = \left\{ s \in S : E_s \textrm{ is semi-stable} \right\}
\]
are Zariski open provided the parameter manifold $S$ is compact.
\end{stable-teleman}

Although the twistor space $\Tw(M)$ is topologically a product $M \times S^2$, we cannot apply this theorem directly since it is not a complex analytic product of $M$ and $\mathbb{CP}^1$. We would thus like to extend Teleman's result in the slightly more general setting of the complex structure $M_I$ varying on the fibres of the twistor projection $\pi : \Tw(M) \to \mathbb{CP}^1$.

\theoremstyle{plain}
\newtheorem{stable-twistor}[stable-teleman]{Theorem}
\begin{stable-twistor} \label{thm:stable-twistor}
Let $M$ be a compact simple hyperk\"ahler manifold with twistor space $\Tw(M)$, and $E$ a holomorphic vector bundle on $\Tw(M)$ of rank $r$. Then the sets
\[
\left(\mathbb{CP}^1\right)^{\mathrm{st}} = \left\{ I \in \mathbb{CP}^1 : E_I \textrm{ is stable} \right\}, \, \left(\mathbb{CP}^1\right)^{\mathrm{sst}} = \left\{ I \in \mathbb{CP}^1 : E_I \textrm{ is semi-stable} \right\}
\]
are Zariski open in $\mathbb{CP}^1$.
\end{stable-twistor}

The proof will essentially be a verification that Teleman's argument in \cite{teleman} works for the twistor projection $\pi : \Tw(M) \to \mathbb{CP}^1$. We will work in the category of complex analytic spaces and their morphisms (for the definitions, see, for instance, \cite{grauert-remmert}). We start by studying the \emph{relative Picard group} of the twistor projection $\pi : \Tw(M) \to \mathbb{CP}^1$, which will be an object $\Pic_{\mathbb{CP}^1} \Tw(M)$ parametrizing the Picard groups $\left\{ \Pic M_I : I \in \mathbb{CP}^1 \right\}$ of the fibres of $\pi$. Generally, the existence of the relative Picard group is a delicate matter (see \cite{kleiman} for results in the algebraic setting). However, in our situation, $\Pic_{\mathbb{CP}^1} \Tw(M)$ has an explicit description, which we now give.

Set-theoretically,
\[
\Pic_{\mathbb{CP}^1} \Tw(M) := \bigslant{\left\{ \left(I, L\right) : I \in \mathbb{CP}^1, L \textrm{ holomorphic line bundle on } M_I \right\}}{\sim},
\]
where $\sim$ identifies isomorphic line bundles. Clearly, the fibres of the natural projection $\Pic_{\mathbb{CP}^1} \Tw(M) \to \mathbb{CP}^1$ are just the Picard groups $\Pic M_I$. Now fix an induced complex structure $I \in \mathbb{CP}$ on $M$. The exponential sheaf sequence
\[
0 \longrightarrow \mathbb{Z} \longrightarrow \mathcal{O}_{I} \stackrel{\exp}{\longrightarrow} \mathcal{O}^*_{I} \longrightarrow 0
\]
gives rise to a long exact sequence in cohomology, a portion of which looks like
\[
0 \longrightarrow H^1(M, \mathbb{Z}) \longrightarrow H^1(M_I, \mathcal{O}_I) \longrightarrow \Pic M_I \stackrel{c_1}{\longrightarrow} H^2 (M, \mathbb{Z})
\]
Since $M$ is simply connected, $H^1(M, \mathbb{Z}) = 0$ and $H^2(M, \mathbb{Z})$ has no torsion. By Hodge theory, $H^1(M_I, \mathcal{O}_I) = 0$, and by the Lefschetz theorem on $(1,1)$-classes, the image of $\Pic M_I \stackrel{c_1}{\to} H^2 (M, \mathbb{Z})$ is equal to $H^{1,1}(M_I) \cap H^2(M, \mathbb{Z})$. It follows from all this that $\Pic M_I$ is discrete and can be identified with the free abelian subgroup $H^{1,1}(M_I) \cap H^2(M, \mathbb{Z}) \subseteq H^2(M, \mathbb{Z})$. Hence we have a set-theoretic embedding 
\begin{equation} \label{vlozhenie}
\begin{array}{ccc}
\Pic_{\mathbb{CP}^1} \Tw(M) & \longhookrightarrow & \mathbb{CP}^1 \times H^2(M, \mathbb{Z}) \\
(I, L) & \longmapsto & \left(I, c_1(L)\right)
\end{array}
\end{equation}
and we would like to show that its image is a closed analytic subset of $\mathbb{CP}^1 \times H^2(M, \mathbb{Z})$ which we think of as the disjoint union of countably many copies of $\mathbb{CP}^1$ indexed by $H^2(M, \mathbb{Z})$.

To see this, let $L$ be a holomorphic line bundle on $M_{I_0}$ for some $I_0 \in \mathbb{CP}^1$. We think of $L$ as an element of $\Pic_{\mathbb{CP}^1} \Tw(M)$ lying above $I_0$ via the projection $\Pic_{\mathbb{CP}^1} \Tw(M) \to \mathbb{CP}^1$. We have one of the following two cases. If $c_1(L)$ is $SU(2)$-invariant, then, as a consequence of Lemma \ref{thm:invariant},
\[
c_1(L) \in \bigcap_{I \in \mathbb{CP}^1} H^{1,1}(M_I) \cap H^2(M, \mathbb{Z}).
\]
Such $L$ are called \emph{hyperholomorphic}. In view of the discussion above, it's clear that the underlying topological line bundle of such $L$ admits a (unique) holomorphic structure in each induced complex structure $I$. Identifying $\Pic_{\mathbb{CP}^1} \Tw(M)$ with its image under the embedding \eqref{vlozhenie}, the connected component of $\Pic_{\mathbb{CP}^1} \Tw(M)$ containing $L$ is the line $\mathbb{CP}^1 \times \left\{c_1(L)\right\}$. On the other hand, suppose $c_1(L)$ is not $SU(2)$-invariant. In this case, $I_0$ cannot be generic. Moreover, by the proof of Proposition 2.2 in \cite{verbit1}, the intersection
\[
\Pic_{\mathbb{CP}^1} \Tw(M) \cap \left( \mathbb{CP}^1 \times \left\{ c_1(L) \right\} \right),
\]
where we again identify $\Pic_{\mathbb{CP}^1} \Tw(M)$ with its image under \eqref{vlozhenie}, is finite, hence $L$ is an isolated point of $\Pic_{\mathbb{CP}^1} \Tw(M)$.

In other words, the connected components of $\Pic_{\mathbb{CP}^1} \Tw(M)$ are either copies of $\mathbb{CP}^1$ or singletons. With this description, the analytic structure of $\Pic_{\mathbb{CP}^1} \Tw(M)$ is apparent, as well as the fact that the natural projection $\Pic_{\mathbb{CP}^1} \Tw(M) \to \mathbb{CP}^1$ is holomorphic.

Recall that stability of a vector bundle $E$ is defined as a condition on its subsheaves $F \hookrightarrow E$. Equivalently, it can be defined as a condition on its quotient sheaves $E \twoheadrightarrow Q$ (see Theorem 1.2.2 of Chapter 2 in \cite{okonek}). Recall that given a vector bundle $E$ on the twistor space $\Tw(M)$, we think of it as a family $\left\{E_I\right\}$ of vector bundles over the manifolds $\left\{M_I\right\}$ parametrized by $\mathbb{CP}^1$. In order to study stability of the bundles $E_I$, we would like to assemble all of their possible quotient sheaves into one geometric object. This is accomplished with the relative Douady Quot space construction \cite{pourcin}.

For a proper morphism of complex manifolds $X \to S$ and a vector bundle $E$ on $X$, the \emph{relative Quot space} $\Quot_S(E)$ is a complex analytic space parametrizing quotient sheaves $E_s \twoheadrightarrow Q_s$ for $s \in S$, where $E_s$ denotes the restriction of $E$ to the fibre $X_s$ of $X \to S$ over $s$.  We denote by $\Quot^1_{\mathrm{lf},S}(E)$ the open subspace of $\Quot_S(E)$ consisting of quotient sheaves $E_s \twoheadrightarrow Q_s$ with invertible kernel. In the particular case $E = \mathcal{O}_X$, we denote $\Quot^1_{\mathrm{lf},S}(E)$ by $\Dou_S(X)$ and call it the \emph{relative Douady space} of $X$ with respect to $S$. Set-theoretically, it is just the collection of effective divisors $D$ of the spaces $X_s$. The following properness result mentioned in \cite{teleman} is a consequence of Bishop's compactness theorem \cite{bishop}.

\theoremstyle{plain}
\newtheorem{douady-proper}[stable-teleman]{Theorem}
\begin{douady-proper} \label{thm:douady-proper}
Let $h$ be a Hermitian metric on a complex manifold $X$, and let $X \to S$ be a proper map onto a complex manifold $S$. Then $\forall \varepsilon > 0$ the topological subspaces
\[
\Dou_S(X)_{\le \varepsilon} := \left\{ D \in \Dou_S(X) : \Vol_h(D) \le \varepsilon \right\} \subseteq \Dou_S(X)
\]
are proper over $S$. Here, for an element $D \subseteq X_s$, $s \in S$, $\Vol_h(D)$ is the volume of $D$ with respect to the restriction of the metric $h$.
\end{douady-proper}

Let now $X \to S$ be the twistor projection $\pi : \Tw(M) \to \mathbb{CP}^1$ for a compact simple hyperk\"ahler manifold $M$, and let $E$ be a holomorphic vector bundle on $E$ of rank $r$. The relative Quot space $\Quot^1_{\mathrm{lf},\mathbb{CP}^1}(E)$ has a natural analytic map to the relative Picard group $\Pic_{\mathbb{CP}^1} \Tw(M)$, which maps every quotient $E_I \twoheadrightarrow Q_I$ to its kernel:
\[
\begin{array}{ccccc}
p & : & \Quot^1_{\mathrm{lf},\mathbb{CP}^1}(E) & \longrightarrow & \Pic_{\mathbb{CP}^1} \Tw(M) \\
&& \psi_I : E_I \twoheadrightarrow Q_I & \longmapsto & \left(I, \Ker \psi_I \right)
\end{array}
\]
It's not hard to see that, given an element $L_I \in \Pic M_I \subset \Pic_{\mathbb{CP}^1} \Tw(M)$, the set-theoretic fibre of $p$ over $L_I$ is simply
\begin{equation} \label{fibre-1}
p^{-1}(L_I) = \mathbb{P}\left(\Hom_{M_I}(L_I, E_I)\right) \cong \mathbb{P}\left(H^0(M_I, L_I^* \otimes E_I)\right).
\end{equation}

Now let $Z = \mathbb{P}(E^*)$ be the projectivization of the dual bundle of $E$ over $\Tw(M)$, thought of as a family of projectivizations $Z_I = \mathbb{P}(E_I^*)$ parametrized by $I \in \mathbb{CP}^1$, and let $\Dou_{\mathbb{CP}^1}(Z)$ be the relative Douady space of $Z$. As a first step in the proof of Theorem \ref{thm:stable-twistor}, we will identify $\Quot^1_{\mathrm{lf}, \mathbb{CP}^1}(E)$ with a certain subspace of $\Dou_{\mathbb{CP}^1}(Z)$. Just like for $\Tw(M)$, we can define the relative Picard group of $Z$ with the natural projection $\Pic_{\mathbb{CP}^1} Z \to \mathbb{CP}^1$, with the fibre over $I \in \mathbb{CP}^1$ being $\Pic Z_I = \Pic \mathbb{P}(E_I^*)$. Since $\Pic \mathbb{P}(E_I^*)$ is canonically isomorphic to $\Pic M_I \times \mathbb{Z}$, with the $\mathbb{Z}$ summand generated by the line bundle $\mathcal{O}_{Z_I}(1)$, we conclude that
\[
\Pic_{\mathbb{CP}^1} Z \cong \Pic_{\mathbb{CP}^1} \Tw(M) \times \mathbb{Z}.
\]
There is a natural analytic map
\[
\begin{array}{ccccc}
n_Z & : & \Dou_{\mathbb{CP}^1}(Z) & \longrightarrow & \Pic_{\mathbb{CP}^1} Z \\
&& D_I \subseteq M_I & \longmapsto & \left(I, \mathcal{O}_{D_I}\right),
\end{array}
\]
whose set-theoretic fibre over an element $N_I \in \Pic Z_I \subset \Pic_{\mathbb{CP}^1} Z$ is just
\begin{equation} \label{fibre-2}
n_Z^{-1}(N_I) = \mathbb{P}\left(H^0(Z_I, N_I)\right).
\end{equation}

Let $q : Z = \mathbb{P}(E^*) \to \Tw(M)$ denote the natural projection, and let $q_I : Z_I = \Pic \mathbb{P}(E_I^*)\to M_I$ be the obvious restrictions. Given a line bundle $L_I$ on $M_I$, we use the projection formula and the fact that $q_*(\mathcal{O}_Z(1)) = E$, to obtain:
\[
H^0(M_I, L_I^* \otimes E_I) \cong H^0(M_I, q_{I*}\!\left(q_I^*(L_I^*) \otimes \mathcal{O}_{Z_I}(1)\right)) \cong H^0(Z_I, q_I^*(L_I^*) \otimes \mathcal{O}_{Z_I}(1)).
\]
With the set-theoretical identifications \eqref{fibre-1}, \eqref{fibre-2} this gives us bijections of fibres $p^{-1}(L_I) \to n_Z^{-1}(q_I^*(L_I^*) \otimes \mathcal{O}_{Z_I}(1))$ for every choice of line bundle $L_I \in \Pic_{\mathbb{CP}^1}\Tw(M)$, which we can assemble into a set-theoretic embedding $\Phi : \Quot^1_{\mathrm{lf}, \mathbb{CP}^1}(E) \to \Dou_{\mathbb{CP}^1}(Z)$ that fits into the diagram
\begin{equation} \label{diagramma-relative}
\xymatrix{\Quot^1_{\mathrm{lf}, \mathbb{CP}^1}(E) \ar[d]_p \ar[r]^(0.55)\Phi & \Dou_{\mathbb{CP}^1}(Z) \ar[d]^{n_Z} \\ \Pic_{\mathbb{CP}^1}\Tw(M) \ar[r]_(0.55)a & \Pic_{\mathbb{CP}^1} Z}
\end{equation}
Here, $a : \Pic_{\mathbb{CP}^1} \Tw(M) \to \Pic_{\mathbb{CP}^1} Z$ is just the map $(I, L_I) \mapsto (I, q_I^*(L_I^*) \otimes \mathcal{O}_{Z_I}(1))$. We would like to verify that $\Phi$ is actually analytic.

\theoremstyle{plain}
\newtheorem{douady-embedding}[stable-teleman]{Proposition}
\begin{douady-embedding} \label{thm:douady-embedding}
The map
\[
\Phi : \Quot^1_{\mathrm{lf}, \mathbb{CP}^1}(E) \stackrel{\sim}{\to} n_Z^{-1}(a(\Pic_{\mathbb{CP}^1}\Tw(M))) \subseteq \Dou_{\mathbb{CP}^1}(Z)
\]
is a complex analytic isomorphism.
\end{douady-embedding}

\begin{proof}
The proof closely follows the argument of Teleman in Proposition 2.3 of \cite{teleman}. The spaces $\Quot^1_{\mathrm{lf}, \mathbb{CP}^1}(E)$, $n_Z^{-1}(a(\Pic_{\mathbb{CP}^1}\Tw(M)))$ represent contravariant functors on the category of complex analytic spaces over $\mathbb{CP}^1$, which we denote by $\mathcal{Q}uot^1_{\mathrm{lf}, \mathbb{CP}^1}(E)$, $\mathcal{D}$, respectively, and the argument consists of exhibiting $\Phi$ as an isomorphism between these two functors. They are defined as follows. For a complex analytic space $T$ and an analytic map $g : T \to \mathbb{CP}^1$,
\[
\mathcal{Q}uot^1_{\mathrm{lf}, \mathbb{CP}^1}(E)(T) := \left\{\textrm{quotients } E_T \to Q \to 0 \textrm { on } \Tw(M)_T = \Tw(M) \times_{\mathbb{CP}^1} T : \right.
\]
\[
\left. Q \textrm{ is flat over } T \textrm{ and } E_T \to Q \textrm{ has invertible kernel} \right\},
\]
where $E_T$ is the pullback of $E$ via the projection $\Tw(M)_T = \Tw(M) \times_{\mathbb{CP}^1} T \to \Tw(M)$, and
\[
\mathcal{D}(T) := \left\{\textrm{divisors } D \subseteq Z_T = Z \times_{\mathbb{CP}^1} T : \right.
\]
\[
\left. \mathcal{O}_D \textrm{ is flat over } T \textrm{ and } \forall t \in T, \ \mathcal{O}(D_t) \in a(\Pic M_{g(t)})\right\},
\]
where $D_t$ is the restriction of $D$ to the fibre $Z_{g(t)}$ of $Z_T = Z \times_{\mathbb{CP}^1} T \to T$ over $t \in T$, and the morphism $a : \Pic M_{g(t)} \to \Pic Z_{g(t)}$ comes from the diagram \eqref{diagramma-relative}.

Now fix $g : T \to \mathbb{CP}^1$. We will construct a bijection between $\mathcal{Q}uot^1_{\mathrm{lf}, \mathbb{CP}^1}(E)(T)$ and $\mathcal{D}(T)$ in a very similar way to $\Phi$. Let $q_T : Z_T \cong \mathbb{P}(E_T^*) \to \Tw(M)_T$ be the natural projection, and let $a_T : \Pic \Tw(M)_T \to \Pic Z_T$ be the map $L \mapsto q_T^*(L^*) \otimes \mathcal{O}_{Z_T}(1)$. Similarly to the morphisms $p$ and $n_Z$ in the diagram \eqref{diagramma-relative}, there are canonical maps
\[
\begin{array}{ccccc}
p_T & : & \mathcal{Q}uot^1_{\mathrm{lf}, \mathbb{CP}^1}(E)(T) & \longrightarrow & \Pic \Tw(M)_T \\
&& \psi : E_T \twoheadrightarrow Q & \longmapsto & \Ker \psi
\end{array}
\]
with fibre over  $L \in \Pic \Tw(M)_T$ equal to
\[
p_T^{-1}(L) = \left\{ \left[\phi\right] \in \mathbb{P}\left(\Hom(L, E_T)\right) : \phi \textrm{ sheaf monomorphism, quotient flat over } T \right\},
\]
and
\[
\begin{array}{ccccc}
n_{Z_T} & : & \mathcal{D}(T) & \longrightarrow & \Pic Z_T \\
&& D \subseteq Z_T & \longmapsto & \mathcal{O}(D)
\end{array}
\]
with fibre over $N \in a_T(\Pic \Tw(M)_T) \subset \Pic Z_T$ equal to
\[
n_{Z_T}^{-1}(N) = \left\{ \left[\psi\right] \in \mathbb{P}\left(\Hom(\mathcal{O}_{Z_T}, N)\right) : \psi \textrm{ sheaf monomorphism, quotient flat over } T \right\}.
\]
Now given $L \in \Pic \Tw(M)_T$, using the projection formula and $(q_T)_*(\mathcal{O}_{Z_T}(1)) = E_T$,
\[
\Hom(L, E_T) = H^0(\Tw(M)_T, L^* \otimes E_T) \cong H^0(Z_T, q_T^*(L^*) \otimes \mathcal{O}_{Z_T}(1)) = \Hom(\mathcal{O}_{Z_T}, a_T(L)).
\]
Just as above, we would like to conclude that this gives us bijections of fibres $p_T^{-1}(L) \longleftrightarrow n_{Z_T}^{-1}(a_T(L))$ for every choice of line bundle $L \in \Pic \Tw(M)_T$, which assemble into a bijection $\Phi_T$ that fits into the diagram
\[
\xymatrix{\mathcal{Q}uot^1_{\mathrm{lf}, \mathbb{CP}^1}(E)(T) \ar[d]_{p_T} \ar[r]^(0.7){\Phi_T} & \mathcal{D}(T) \ar[d]^{n_{Z_T}} \\ \Pic \Tw(M)_T \ar[r]_(0.55){a_T} & \Pic Z_T}
\]

However, two things need to be verified. First, as non-reduced and reducible spaces have nonzero morphisms $L \to E_T$ that are not sheaf monomorphisms, one has to check that in our correspondence, sheaf monomorphisms in $\Hom(L, E_T)$ get mapped to sheaf monomorphisms in $\Hom(\mathcal{O}_{Z_T}, q_T^*(L^*) \otimes \mathcal{O}_{Z_T}(1))$. This is a local statement on $\Tw(M)_T$, which is easily verified using trivializations of $L$ and $E_T$. The second verification one has to make is that the flatness conditions for $p_T^{-1}(L)$ and $n_{Z_T}^{-1}(q_T^*(L^*) \otimes \mathcal{O}_{Z_T}(1))$ are equivalent. So let $Q$ be the quotient of a monomorphism $L \to E_T$, and let $Q'$ be the quotient of the corresponding monomorphism $\mathcal{O}_{Z_T} \to q_T^*(L^*) \otimes \mathcal{O}_{Z_T}(1)$. Choosing a point $x = (m, t) \in \Tw(M)_T = \Tw(M) \times_{\mathbb{CP}^1} T$, by the local flatness criterion (see \cite{eisenbud}, Theorem 6.8), $Q$ is $T$-flat at $(m, t)$ if and only if $\Tor_1^{\mathcal{O}_t} (\mathbb{C}_t, Q_{(m,t)}) = 0$. Since $\pi_T : \Tw(M)_T \to T$ is a flat morphism (a consequence of the fact that $\pi : \Tw(M) \to \mathbb{CP}^1$ is flat), we have
\[
\Tor_1^{\mathcal{O}_t} (\mathbb{C}_t, Q_{(m,t)}) = \Tor_1^{\mathcal{O}_{(m,t)}}(\mathbb{C}_t \otimes_{\mathcal{O}_t} \mathcal{O}_{(m,t)}, Q_{(m,t)}),
\]
and the latter is just the stalk at $(m,t)$ of the sheaf $\mathcal{T}or_1(\mathcal{O}_{\pi_T^{-1}(t)}, Q) = \mathcal{T}or_1(\mathcal{O}_{M_{g(t)}}, Q)$. So the flatness of $Q$ is equivalent to the vanishing of the sheaves $\mathcal{T}or_1(\mathcal{O}_{M_{g(t)}}, Q)$ for every $t \in T$, which in turn is equivalent to the injectivity of the sheaf morphism
\[
L_t := \left.L\right|_{\pi_T^{-1}(t)} \longrightarrow \left.(E_T)\right|_{\pi_T^{-1}(t)} \cong E_{g(t)}
\]
for every $t$. By an entirely analogous argument, $Q'$ is flat if and only if the induced sheaf morphism
\[
\left.\mathcal{O}_{Z_T}\right|_{\pi_T^{-1}(t)} \cong \mathcal{O}_{Z_{g(t)}} \longrightarrow q_{g(t)}^*(L_t^*) \otimes \mathcal{O}_{Z_{g(t)}}(1)
\]
is injective for all $t$. The equivalence of the two conditions is shown exactly as the corresponding statement for $\Hom(L, E_T)$ and $\Hom(\mathcal{O}_{Z_T}, q_T^*(L^*) \otimes \mathcal{O}_{Z_T}(1))$.
\end{proof}

We would now like to apply Proposition \ref{thm:douady-embedding} to translate Theorem \ref{thm:douady-proper} from a statement about properness of subsets of $\Dou_{\mathbb{CP}^1}(Z)$ into a statement about properness of subsets of $\Quot^1_{\mathrm{lf}, \mathbb{CP}^1}(E)$.

\theoremstyle{plain}
\newtheorem{degree-proper}[stable-teleman]{Proposition}
\begin{degree-proper} \label{thm:degree-proper}
Let $g$ denote the hyperk\"ahler metric on $M$. For any $d \in \mathbb{R}$, the subspaces
\[
\Quot^1_{\mathrm{lf}, \mathbb{CP}^1}(E)_{\ge d}, \, \Quot^1_{\mathrm{lf}, \mathbb{CP}^1}(E)_{> d} \subseteq \Quot^1_{\mathrm{lf}, \mathbb{CP}^1}(E),
\]
defined by the inequalities $\deg_g(L_I) \ge d$, resp. $\deg_g(L_I) > d$, are complex analytic and proper over $\mathbb{CP}^1$.
\end{degree-proper}

\begin{proof}
Recall that we have maps
\[
\begin{array}{ccccc}
\Quot^1_{\mathrm{lf}, \mathbb{CP}^1}(E) & \stackrel{p}{\longrightarrow} & \Pic_{\mathbb{CP}^1} \Tw(M) & \stackrel{\deg_g}{\longrightarrow} & \mathbb{R} \\ \psi :E_I \twoheadrightarrow Q_I & \longmapsto & (I, L_I := \Ker \psi) & \longmapsto & \deg_g(L_I)
\end{array}
\]
As we saw, the connected components of $\Pic_{\mathbb{CP}^1} \Tw(M)$ are either isolated points or copies of $\mathbb{CP}^1$, and on the latter the degree map is constantly zero, by virtue of Lemma \ref{thm:degr}. In particular, the degree map is locally constant on $\Pic_{\mathbb{CP}^1} \Tw(M)$, hence it is also locally constant on $\Quot^1_{\mathrm{lf}, \mathbb{CP}^1}(E)$. It follows at once that both $\Quot^1_{\mathrm{lf}, \mathbb{CP}^1}(E)_{\ge d}$ and $\Quot^1_{\mathrm{lf}, \mathbb{CP}^1}(E)_{> d}$ are unions of connected components of $\Quot^1_{\mathrm{lf}, \mathbb{CP}^1}(E)$, hence they are analytic. It remains to show that they are compact.

The rest of the proof closely follows the argument of Teleman on page 9 of \cite{teleman}. Let $r$ denote the rank of $E$ and $n$ the complex dimension of $M$. Recall that the hyperk\"ahler metric $g$ induces a natural metric on the twistor space $\Tw(M)$, which we will also denote by $g$, abusing the notation slightly. Choose an arbitrary Hermitian metric $h$ on $E$. The Chern connection of $(E, h)$ induces an Ehresmann connection on the projective bundle $q : Z = \mathbb{P}(E^*) \to \Tw(M)$, that is, a horizontal subbundle $HZ \subseteq TZ$ such that there is a direct sum decomposition
\[
TZ = HZ \oplus VZ,
\]
where $VZ$ is the vertical tangent bundle of $q : Z \to \Tw(M)$. Because the Chern connection of $(E, h)$ is compatible with the holomorphic structure of $E$, the distribution $HZ \subseteq TZ$ is preserved by the almost-complex structure of $Z$. On the other hand, the metric $h$ on $E$ induces a natural Hermitian metric on the vertical tangent bundle $VZ$ on $Z$, which is just the Fubini-Study metric on the fibres $q^{-1}(x) = \mathbb{P}(E_x^*) \cong \mathbb{P}^{r-1}$. We will denote by $\omega_{FS}$ the corresponding Hermitian form, thought of as a real vertical $(1,1)$-form on $Z$. It is now easy to see that if $\omega$ denotes the Hermitian form of $g$ on $\Tw(M)$, 
\[
\Omega := q^*(\omega) + \omega_{FS}
\]
is a real positive $(1,1)$-form on $Z$ such that $TZ = HZ \oplus VZ$ becomes an orthogonal direct sum in the corresponding metric $G$ on $Z$. Letting $I \in \mathbb{CP}^1$, the restriction of $G$ to the submanifold $Z_I \subseteq Z$, as in the diagram
\[
\xymatrix{ Z_I = \mathbb{P}(E_I^*) \ar@{^{(}->}[r] \ar[d]_{q_I} & Z = \mathbb{P}(E^*) \ar[d]^q \\ M_I \ar@{^{(}->}[r] & \Tw(M), }
\]
will be denoted by $G_I$. Its Hermitian form is
\[
\Omega_I := q_I^*(\omega_I) + \omega_{FS},
\]
where $\omega_I$ is the K\"ahler form on $M_I$.

Now fix $I \in \mathbb{CP}^1$ and let $L_I$ be a holomoprhic line bundle on $M_I$. We want to relate the degree of $L_I$ with respect to $g$ to the degree of $q_I^*(L_I)$ with respect to a Gauduchon metric in the conformal class of $G_I$. By Theorem \ref{thm:kobayashi-hitchin}, there exists a $g$-Hermitian-Einstein metric $\gamma$ on $L_I$, and by Proposition \ref{thm:einst-slope}, the curvature $R^\gamma$ of the Chern connection on $(L_I, \gamma)$ satisfies the equation
\[
\frac{\sqrt{-1}}{2\pi}R^\gamma \wedge \omega_I^{n-1} = \frac{\deg_g(L_I)}{n! \Vol_g(M)}\omega_I^n.
\]
We now verify that the metric $q_I^*(\gamma)$ on $q_I^*(L_I)$ is $G_I$-Hermitian-Einstein, and its Einstein constant is proportional to $\deg_g(L_I)$. We will use the fact that $\omega_{FS}^k = 0$ for $k \ge r$ on $Z_I$.
\[
\left(\frac{\sqrt{-1}}{2\pi}q_I^*(R^\gamma)\right) \wedge \Omega_I^{n+r-2} = \binom{n+r-2}{r-1} \, q_I^*\left(\frac{\sqrt{-1}}{2\pi}R^\gamma \wedge \omega_I^{n-1}\right) \wedge \omega_{FS}^{r-1} =
\]
\[
= \frac{(n+r-2)!}{(r-1)! (n-1)!} \frac{\deg_g(L_I)}{n! \Vol_g(M)} q_I^*(\omega_I^n) \wedge \omega_{FS}^{r-1} =
\]
\[
= \frac{1}{n+r-1}\left(\frac{\deg_g(L_I)}{(n-1)! \Vol_g(M)}\right) \Omega_I^{n+r-1},
\]
where we have used the fact that
\[
\Omega_I^{n+r-1} = \binom{n+r-1}{r-1} \, q_I^*(\omega_I^n) \wedge \omega_{FS}^{r-1}.
\]
We have thus shown that the $G_I$-Hermitian-Einstein constant of the line bundle $q_I^*(L_I)$ on $Z_I$ is proportional to $\deg_g(L_I)$ (and in fact the constant of proportionality does not depend on the complex structure $I$).

If we now let $I \in \mathbb{CP}^1$ vary, then since $G_I$ depends smoothly on $I$, we can choose a family of Gauduchon metrics $\left\{G_I'\right\}$ on $\left\{Z_I\right\}$ such that $G_I'$ is in the conformal class of $G_I$ and $G_I'$ depends smoothly on $I$. Let the function $C_1 : \mathbb{CP}^1 \to \mathbb{R}$ be defined by
\[
C_1(I) := \deg_{G_I'}(\mathcal{O}_{Z_I}(1)).
\]
By Proposition 1.3.5 of \cite{lubke-teleman}, there is a function $C_2 : \mathbb{CP}^1 \to \mathbb{R}^{>0}$ such that for any $I \in \mathbb{CP}^1$ and any nontrivial line bundle $N_I \in \Pic Z_I$ with a nonzero section $s \in H^0(Z_I, N_I)$,
\[
\Vol_{G_I'} \left\{s = 0\right\} = C_2(I) \cdot \deg_{G_I'} N_I.
\]
Finally, by the result in the previous paragraph and Proposition \ref{thm:degree-proportional}, there is also a function $C_3 : \mathbb{CP}^1 \to \mathbb{R}^{>0}$ such that for any $I \in \mathbb{CP}^1$ and $L_I \in \Pic M_I$,
\[
\deg_{G_I'} (q_I^*(L_I)) = C_3(I) \cdot \deg_g(L_I).
\]
All three functions $C_1, C_2, C_3$ are continuous on $\mathbb{CP}^1$ by continuity of the family $\left\{G_I'\right\}$.

Recall that we have a map
\[
\Phi : \Quot^1_{\mathrm{lf},\mathbb{CP}^1}(E) \longrightarrow \Dou_{\mathbb{CP}^1}(Z),
\]
which is a complex analytic isomorphism of $\Quot^1_{\mathrm{lf},\mathbb{CP}^1}(E)$ with a union of connected components in $\Dou_{\mathbb{CP}^1}(Z)$. Fixing $I \in \mathbb{CP}^1$, a line bundle $L_I \in \Pic M_I$ and a nonzero sheaf monomorphism $\varphi_I : L_I \to E_I$ thought of as an element of $\Quot^1_{\mathrm{lf}, \mathbb{CP}^1}(E)$, we have
\[
\Vol_{G_I'} \Phi(\left[\varphi_I\right]) = C_2(I) \cdot \deg_{G_I'} (\mathcal{O}_{Z_I}(1) \otimes q_I^*(L_I^*)) =
\]
\[
= C_2(I) \cdot \deg_{G_I'}(\mathcal{O}_{Z_I}(1)) - C_2(I) \cdot C_3(I) \cdot \deg_g(L_I).
\]
\[
= C_2(I) \cdot C_1(I) - C_2(I) \cdot C_3(I) \cdot \deg_g(L_I).
\]
Letting $\varepsilon > 0$, we know by Theorem \ref{thm:douady-proper} that the subset
\[
\Phi^{-1}\left(\Dou_{\mathbb{CP}^1}(Z)_{\le \varepsilon}\right) = \left\{ \varphi_I : L_I \to E_I \left| \ \deg_g(L_I) \ge \frac{C_2(I)C_1(I) - \varepsilon}{C_2(I)C_3(I)} \right.\right\} \subseteq \Quot^1_{\mathrm{lf},\mathbb{CP}^1}(E)
\]
is proper over $\mathbb{CP}^1$, hence in particular compact. Choosing $\varepsilon \gg 0$ large enough so that both $\Quot^1_{\mathrm{lf}, \mathbb{CP}^1}(E)_{\ge d}$ and $\Quot^1_{\mathrm{lf}, \mathbb{CP}^1}(E)_{> d}$ are subsets of $\Phi^{-1}\left(\Dou_{\mathbb{CP}^1}(Z)_{\le \varepsilon}\right)$, we conclude that both are compact. We are done.
\end{proof}

To go ahead with the proof of Theorem \ref{thm:stable-twistor}, we need the following version of the Pl\"ucker embedding for vector bundles. Let $Y$ be a complex manifold and $E$ a holomorphic vector bundle on $Y$ of rank $r$. For $1 \le s \le r-1$, the \emph{cone of exterior monomials} $C_s(E) \subseteq \Lambda^s E$ is a cone subbundle of $\Lambda^s E$, which over a point $y \in Y$ consists of elements that can be written in the form $v_1 \wedge \ldots \wedge v_s$, for $v_i \in E_y$. We have the following correspondence.
\begin{equation} \label{correspondence}
\left\{\begin{array}{c} \textrm{Subsheaves } \mathcal{F} \hookrightarrow E \\ \textrm{of rank } s \end{array} \right\} \substack{\longrightarrow \\ \longleftarrow} \left\{\begin{array}{c} \textrm{Line subsheaves } L \hookrightarrow \Lambda^s E \\ \textrm{with image in } C_s(E) \subseteq \Lambda^s E \end{array} \right\}
\end{equation}
Given a rank $s$ subsheaf $\mathcal{F} \subset E$, the corresponding line subsheaf is just $\det \mathcal{F} \subset \Lambda^s E$. On the other hand, given a line subsheaf $L \subset \Lambda^s E$ with image in $C_s(E)$, we can tensor the sheaf monomorphism $L \hookrightarrow \Lambda^s E$ with $\Lambda^{s-1} E^*$ and take the composition
\[
L \otimes \Lambda^{s-1} E^* \longrightarrow \Lambda^s E \otimes \Lambda^{s-1} E^* \longrightarrow E,
\]
where the second arrow is tensor contraction. Taking the maximal normal extension (see \cite{okonek}, page 80) of the image of this morphism in $E$, we get a subsheaf $\mathcal{F} \subset E$ of rank $s$. If one requires the subsheaves on both sides of the correspondence \eqref{correspondence} to have torsion-free quotients, one can check that the arrows become set-theoretical inverses. With this construction, the following result becomes apparent.

\theoremstyle{plain}
\newtheorem{stability-cone}[stable-teleman]{Proposition}
\begin{stability-cone} \label{thm:stability-cone}
Let $Y$ be a complex manifold with a Gauduchon metric $g$, and let $E$ be a holomorphic rank $r$ vector bundle over $Y$. The following conditions are equivalent:
\begin{enumerate}
\item[(i)] $E$ is $g$-stable ($g$-semi-stable).
\item[(ii)] For every $1 \le s \le r-1$, and any non-trivial morphism $\varphi : L \to \Lambda^s E$, where $L$ is a line bundle and $\Imag(\varphi) \subseteq C_s(E)$, one has
\[
\deg_g L < s \cdot \mu_g(E) \ (\textrm{resp. } \deg_g L \le s \cdot \mu_g(E)).
\]
\end{enumerate}
\end{stability-cone}

\begin{proof}
Proposition 2.15 in \cite{teleman}.
\end{proof}

\begin{proof}[Proof of Theorem \ref{thm:stable-twistor}]
Fix $1 \le s \le r -1$. Recall that the set $S_0 \subseteq {{}S}^2 \cong \mathbb{CP}^1$ of generic complex structures on $M$ is dense in $\mathbb{CP}^1$ by Proposition \ref{thm:dense}. For $I \in S_0$, $\mu_g(E_I) = 0$ by Lemma \ref{thm:degr}, so by continuity, $\mu_g(E_I) = 0$ for all $I \in \mathbb{CP}^1$, and similarly, $\mu_g(\Lambda^s E_I) = 0$. By Proposition \ref{thm:degree-proper}, the subpaces
\[
\Quot^1_{\mathrm{lf}, \mathbb{CP}^1}(\Lambda^s E)_{\ge d}, \ \Quot^1_{\mathrm{lf}, \mathbb{CP}^1}(\Lambda^s E)_{> d} \subseteq \Quot^1_{\mathrm{lf}, \mathbb{CP}^1}(\Lambda^s E)
\]
are analytic and proper over $\mathbb{CP}^1$, hence their projections in $\mathbb{CP}^1$
\[
\mathbb{CP}^1_{\ge d} = \left\{I : \exists L_I \in \Pic M_I \textrm{ and } \varphi_I : L_I \hookrightarrow \Lambda^s E_I \textrm{ such that } \deg_g L_I \ge d\right\},
\]
\[
\mathbb{CP}^1_{> d} = \left\{I : \exists L_I \in \Pic M_I \textrm{ and } \varphi_I : L_I \hookrightarrow \Lambda^s E_I \textrm{ such that } \deg_g L_I > d\right\}
\]
are Zariski closed. Let $\Quot^1_{\textrm{lf},\mathbb{CP}^1}(E)^s$ be the closed analytic subspace of $\Quot^1_{\textrm{lf},\mathbb{CP}^1}(\Lambda^s E)$ consisting of equivalence classes of sheaf monomorphisms $\varphi_I : L_I \to \Lambda^s E_I$ with $\Imag(\varphi_I) \subseteq C_s(E_I)$. Then the intersections
\[
\Quot^1_{\mathrm{lf}, \mathbb{CP}^1}(\Lambda^s E)_{\ge d} \cap \Quot^1_{\mathrm{lf}, \mathbb{CP}^1}(E)^s, \ \Quot^1_{\mathrm{lf}, \mathbb{CP}^1}(\Lambda^s E)_{> d} \cap \Quot^1_{\mathrm{lf}, \mathbb{CP}^1}(E)^s
\]
are again analytic and proper over $\mathbb{CP}^1$, hence their projections in $\mathbb{CP}^1$
\[
\left(\mathbb{CP}^1_{\ge d}\right)^s = \left\{I : \exists \varphi_I : L_I \hookrightarrow \Lambda^s E_I \textrm{ with } \Imag(\varphi_I) \subseteq C_s(E_I) \textrm{ and such that } \deg_g L_I \ge d\right\},
\]
\[
\left(\mathbb{CP}^1_{> d}\right)^s = \left\{I : \exists \varphi_I : L_I \hookrightarrow \Lambda^s E_I \textrm{ with } \Imag(\varphi_I) \subseteq C_s(E_I) \textrm{ and such that }  \deg_g L_I > d\right\}
\]
are again Zariski closed. It only remains to observe that, by Proposition \ref{thm:stability-cone},
\[
\left(\mathbb{CP}^1\right)^{\textrm{st}} = \mathbb{CP}^1 \setminus \bigcup_{1 \le s \le r-1} \left(\mathbb{CP}^1_{\ge 0}\right)^s, \, \left(\mathbb{CP}^1\right)^{\textrm{sst}} = \mathbb{CP}^1 \setminus \bigcup_{1 \le s \le r-1} \left(\mathbb{CP}^1_{> 0}\right)^s.
\]

\end{proof}

%%%%%%%%% Irreducible bundles and fibrewise stability %%%%%%%%%
\section{Irreducible bundles on $\mathrm{Tw}(M)$ and fibrewise stability} \label{sect:main}

Recall that an irreducible vector bundle is one that does not have proper subsheaves of lower rank, while a generically fibrewise stable bundle $E$ on the twistor space $\Tw(M)$ of a hyperk\"ahler manifold $M$ is one whose restriction $E_I$ to the fibre $M_I$ of the twistor projection $\pi : \Tw(M) \to \mathbb{CP}^1$ is stable for all $I \in \mathbb{CP}^1$, except perhaps finitely many. The main result of the article follows.

\theoremstyle{plain}
\newtheorem{result}{Theorem}[section]
\begin{result} \label{thm:result}
Let $M$ be a compact simple hyperk\"ahler manifold and let $E$ be a holomorphic vector bundle on the twistor space $\Tw(M)$. If $E$ is generically fibrewise stable, then it is irreducible. The converse is true for vector bundles of rank 2 and 3, as well as for bundles $E$ of general rank which are simple over some fibre of the projection $\pi : \Tw(M) \to \mathbb{CP}^1$, i.e. $\Hom_{M_I}(E_I, E_I) = \mathbb{C}$ for some $I \in \mathbb{CP}^1$.
\end{result}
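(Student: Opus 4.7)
My plan would be to split the proof into the forward direction and the converse. For the forward direction, I would argue by contraposition: given a proper subsheaf $F\subsetneq E$ of intermediate rank, I pick an $I$ in the (dense, nonempty) intersection of the Zariski-open fibrewise-stable locus with the dense cocountable set $S_0$. Then $F_I\hookrightarrow E_I$ is a proper subsheaf of the expected rank, and Lemma~\ref{thm:degr} combined with the definition of $S_0$ forces $\mu_g(F_I)=\mu_g(E_I)=0$, contradicting strict stability of $E_I$.

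For the converse, by Theorem~\ref{thm:stable-twistor} it suffices to exhibit a single $I$ at which $E_I$ is stable, so I assume toward a contradiction that $E$ is irreducible yet $E_I$ is nowhere stable. On $I\in S_0$ every subsheaf of $E_I$ has slope $0$ (Lemma~\ref{thm:degr}), so $E_I$ is semistable and stability is equivalent to irreducibility. By Proposition~\ref{thm:stability-cone} each such $I$ admits $s\in\{1,\dots,r-1\}$ and a line subsheaf $L_I\hookrightarrow\Lambda^s E_I$ of degree $\ge 0$ with image in the cone $C_s(E_I)$. Assembled over $I$, these populate the closed subspace $\Quot^1_{\mathrm{lf},\mathbb{CP}^1}(E)^s\cap\Quot^1_{\mathrm{lf},\mathbb{CP}^1}(\Lambda^s E)_{\ge 0}$ (as defined in the proof of Theorem~\ref{thm:stable-twistor}), which is proper over $\mathbb{CP}^1$ by Proposition~\ref{thm:degree-proper}. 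Its image in $\mathbb{CP}^1$ is a closed analytic subset containing the uncountable set $S_0$, so for some $s$ it equals all of $\mathbb{CP}^1$; pick an irreducible component $W$ surjecting onto $\mathbb{CP}^1$. Since $W$ is connected, its image in $\Pic_{\mathbb{CP}^1}\Tw(M)$ lies in a single connected component, and surjectivity over $\mathbb{CP}^1$ rules out the isolated-point components of the dichotomy recalled in Section~\ref{sect:teleman}, so that component is a $\mathbb{CP}^1$-piece corresponding to a hyperholomorphic line bundle $L$ on $\Tw(M)$. Then $\pi_*(L^*\otimes\Lambda^s E)$ has positive generic rank, so twisting by $\pi^*\mathcal{O}_{\mathbb{CP}^1}(k)$ for $k\gg 0$ yields a nonzero, hence injective, morphism
\[
L\otimes\pi^*\mathcal{O}_{\mathbb{CP}^1}(-k)\hookrightarrow\Lambda^s E
\]
on $\Tw(M)$.

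Closing the contradiction then depends on the hypothesis. For $r\in\{2,3\}$ the only possible $s$ are $1$ and $r-1$, and the Grassmannians $\Gr(1,r)$ and $\Gr(r-1,r)$ fill all of $\mathbb{P}(\Lambda^s\mathbb{C}^r)$, so $C_s(E)=\Lambda^s E$ identically. Hence the injection above automatically lies in the cone, and the Pl\"ucker correspondence \eqref{correspondence} converts it into a rank-$s$, and therefore proper, subsheaf of $E$, contradicting irreducibility. For arbitrary rank under the simple-fibre hypothesis, I would use upper semicontinuity of $I\mapsto h^0(M_I,\End E_I)$ in the flat proper family $\pi$ to locate $I\in S_0$ at which $E_I$ is simultaneously semistable and simple; the cone-vacuous Quot argument for $s=1$ and $s=r-1$ applied to $E$ and to the equally irreducible $E^*$ then rules out rank-one and rank-$(r-1)$ subsheaves of this $E_I$, and simplicity supplies the rigidity needed to exclude the remaining proper subsheaves via the non-split extensions they would force. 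The main obstacle is that for $2\le s\le r-2$ the cone $C_s(E)\subsetneq\Lambda^s E$ is a proper subvariety, so a global section of $\pi_*(L^*\otimes\Lambda^s E)\otimes\mathcal{O}_{\mathbb{CP}^1}(k)$ need not land in it; rank 2 and rank 3 sidestep this because every admissible $s$ is extremal, while the simple-fibre hypothesis is precisely the input that bridges this gap in higher rank.
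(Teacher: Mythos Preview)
Your forward implication and the rank~$2$, rank~$3$ converse are correct and follow the paper's approach almost verbatim: pick a generic $I\in S_0$ where $E_I$ is stable, use that all slopes vanish there, and for the converse note that $C_s(E)=\Lambda^s E$ when $s\in\{1,r-1\}$, so a line in $\pi_*(L^*\otimes\Lambda^s E)$ globalizes to a proper subsheaf of $E$ via the Pl\"ucker correspondence.

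The simple-fibre case, however, has a genuine gap. Your plan is to find a single $I\in S_0$ with $E_I$ simple and semistable, and then argue that $E_I$ is irreducible. But the two ingredients you invoke do not do what you claim. First, the ``cone-vacuous Quot argument for $s=1$ and $s=r-1$'' shows only that \emph{if} $(\mathbb{CP}^1_{\ge 0})^s=\mathbb{CP}^1$ for one of these extremal $s$, then $E$ acquires a global proper subsheaf; it does \emph{not} show that your chosen $E_I$ lacks rank-$1$ or rank-$(r-1)$ subsheaves. The set $(\mathbb{CP}^1_{\ge 0})^1$ may well be a proper closed subset that still contains your particular $I$. Second, the assertion that ``simplicity supplies the rigidity needed to exclude the remaining proper subsheaves via the non-split extensions they would force'' is not a valid argument: a simple semistable bundle can perfectly well have proper subsheaves of any intermediate rank (indeed, strictly semistable simple bundles exist in abundance). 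Simplicity of $E_I$ alone gives no mechanism to rule out a rank-$s$ subsheaf with $2\le s\le r-2$.

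The paper's route is substantially different and more elaborate. It does not try to prove irreducibility of any single $E_I$. Instead, having located an $s$ with $(\mathbb{CP}^1_{\ge 0})^s=\mathbb{CP}^1$ and a hyperholomorphic $L$, it observes that the projective variety $Y\subset\mathbb{P}(\pi_*(L^*\otimes\Lambda^s E))$ of cone-valued morphisms may admit no section over $\mathbb{CP}^1$, but always admits a \emph{multisection}: a branched cover $f:X\to\mathbb{CP}^1$ with a section of $Y\times_{\mathbb{CP}^1}X\to X$ (Lemma~\ref{thm:multisect}). Pulling back to $Z=\Tw(M)\times_{\mathbb{CP}^1}X$ one obtains a genuine rank-$s$ subsheaf $\mathcal{F}\subset\varphi^*E$. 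Pushing forward along the finite map $\varphi:Z\to\Tw(M)$ gives $\varphi_*\mathcal{F}\hookrightarrow\varphi_*\varphi^*E\cong\bigoplus_{l=1}^d E(D_l)$. Irreducibility of $E$ forces $\varphi_*\mathcal{F}$ to be generically isomorphic to a sub-direct-sum $\bigoplus_{l\le t}E(D_l)$ with $t<d$. The simple-fibre hypothesis enters precisely here: by semicontinuity it implies $\pi_*(E^*\otimes E)\cong\mathcal{O}_{\mathbb{CP}^1}$, so every morphism $E(D_j)\to E(D_i)$ is a meromorphic scalar. Hence the inclusion $\varphi_*\mathcal{F}\hookrightarrow\bigoplus E(D_l)$ is given by a $d\times t$ matrix of meromorphic functions on $\mathbb{CP}^1$, and comparing this with the local description of $\varphi_*$ over an evenly covered open set produces a rank contradiction. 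This multisection/pushforward machinery is the missing idea in your proposal.
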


\begin{proof}[Proof of forward implication and converse for the cases $\rk E = 2, 3$]
The forward implication is due to Kaledin and Verbitsky (\cite{kaled-verbit}, Lemma 7.3).  Suppose $E$ is generically fibrewise stable. Since the set of generic induced complex structures $S_0$ is dense in $\mathbb{CP}^1$ by Proposition \ref{thm:dense}, we can always choose $I \in S_0$ such that $E_I$ is stable. In fact, by Lemma \ref{thm:degr}, it is irreducible, since any proper subsheaf of $E_I$ of lower rank would destabilize $E_I$, both having slope 0. Any subsheaf $\mathcal{F} \subseteq E$ on $\Tw(M)$ is torsion-free, hence its singularity set has codimension $\ge 2$, so in particular its restriction to $M_I$ is a subsheaf $\mathcal{F}_I \subseteq E_I$ of the same rank as $\mathcal{F}$. It follows that either $\rk(\mathcal{F}) = 0$, so that $\mathcal{F} = 0$, or $\rk(\mathcal{F}) = \rk (E)$. Thus $E$ is irreducible. Observe that in our proof we have only used that $E_I$ is stable for a single generic complex structure $I \in S_0 \subseteq \mathbb{CP}^1$, which is consistent with the results of the previous section.

We now prove the converse for the cases $\rk E = 2$ and $3$. Let $E$ be a vector bundle of rank 2 on $\Tw(M)$, and suppose $E$ is not generically fibrewise stable. Then, by Theorem \ref{thm:stable-twistor}, it actually follows that $E_I$ is non-stable for all $I \in \mathbb{CP}^1$, i.e. the map
\[
\Quot^1_{\textrm{lf}, \mathbb{CP}^1}(E)_{\ge 0} \longrightarrow \mathbb{CP}^1
\]
is surjective. Since this map is analytic and proper, we conclude that there is a connected component in $\Quot^1_{\textrm{lf}, \mathbb{CP}^1}(E)_{\ge 0}$ which projects onto $\mathbb{CP}^1$. By the set-theoretic description of the relative Quot space $\Quot^1_{\textrm{lf}, \mathbb{CP}^1}(E)$ and the relative Picard group $\Pic_{\mathbb{CP}^1} \Tw(M)$ given in the previous section, this means that there exists a (topological) complex line bundle $L$ on $M$ with $SU(2)$-invariant first Chern class $c_1(L)$ such that for every induced complex structure $I \in \mathbb{CP}^1$, $L$ admits a unique holomorphic structure $L_I$ over $M_I$, and there exist nontrivial morphisms $L_I \to E_I$. Moreover, just as in the proof of Proposition \ref{thm:divisors}, one can construct a Hermitian metric on $L$ and a Hermitian connection $\nabla$ whose curvature is $SU(2)$-invariant, and so as a consequence of Lemma \ref{thm:invariant}, the (0,1)-part of $\nabla$ with respect to $I$ induces the holomorphic structure $L_I$, for all $I$. Taking the pullback bundle and connection $(\sigma^*\!L, \sigma^*\nabla)$ along the (non-holomorphic) twistor projection $\sigma : \Tw(M) \to M$, it's not hard to check that the $(0,1)$-part of $\sigma^*\nabla$ defines a holomorphic structure on $\sigma^*\!L$. We will denote this holomorphic line bundle on $\Tw(M)$ by $L$ as well, as this should not cause any confusion. Note that the restriction of this $L$ to the fibre $M_I = \pi^{-1}(I)$ is just $L_I$.

In sum, we have a holomorphic line bundle $L$ on $\Tw(M)$ such that for every $I \in \mathbb{CP}^1$, 
\[
\dim \Hom_{M_I}(L_I, E_I) = \dim H^0(M_I, L_I^* \otimes E_I) \ge 1,
\]
where $L_I$ is the restriction of $L$ to the fibre $M_I$ of the holomorphic twistor projection $\pi : \Tw(M) \to \mathbb{CP}^1$. It follows that the pushforward sheaf $\pi_*(L^* \otimes E)$ is nonzero. Moreover, it is torsion-free because $L^* \otimes E$ is, and since torsion-free sheaves on $\mathbb{CP}^1$ are locally free, we conclude that $\pi_*(L^* \otimes E)$ is a nonzero vector bundle on $\mathbb{CP}^1$. Let
\[
0 \longrightarrow K \longrightarrow \pi_*(L^* \otimes E)
\]
be any line subsheaf. Taking its pullback along $\pi$ and composing with the evaluation map $\pi^*\pi_*(L^* \otimes E) \to L^* \otimes E$, we get a nonzero morphism
\[
\pi^* K \longrightarrow \pi^*\pi_*(L^* \otimes E) \longrightarrow L^* \otimes E.
\]
Tensoring this composition with $L$, we get a line subsheaf of $E$, hence $E$ is not irreducible.

Now let $E$ be a vector bundle of rank 3 on $\Tw(M)$, and suppose $E$ is not generically fibrewise stable. Again, by Theorem \ref{thm:stable-twistor}, $E_I$ admits a destabilizing subsheaf $\forall I \in \mathbb{CP}^1$. In the notation of the proof of Theorem \ref{thm:stable-twistor}, we have
\[
\mathbb{CP}^1 = \left(\mathbb{CP}_{\ge 0}^1\right)^1 \cup \left(\mathbb{CP}_{\ge 0}^1\right)^2,
\]
and since these subsets are Zariski closed, it follows that one of them is equal to the whole $\mathbb{CP}^1$. If $\left(\mathbb{CP}_{\ge 0}^1\right)^1 = \mathbb{CP}^1$, then a repeat of the argument for the case $\rk E = 2$ gives a line subsheaf of $E$. In case $\left(\mathbb{CP}_{\ge 0}^1\right)^2 = \mathbb{CP}^1$, observing that  the cone subbundle of exterior monomials $C_2(E) \subseteq \Lambda^2 E$ is equal to the whole $\Lambda^2 E$ for a rank 3 vector bundle, we repeat the same argument again with $E$ replaced by $\Lambda^2 E$ to conclude the existence of a line subsheaf of $\Lambda^2 E$ on $\Tw(M)$, which by the correspondence \eqref{correspondence} gives a rank 2 subsheaf of $E$.
\end{proof}

Before proving the converse for the case that $E$ is simple over a fibre of the projection $\pi : \Tw(M) \to \mathbb{CP}^1$, let us see where the generalization of our argument for the case $\rk E = 3$ to bundles of general rank $r$ breaks down. Let $E$ be irreducible and assume for contradiction that $E$ is not generically fibrewise stable. Theorem \ref{thm:stable-twistor} again gives us
\[
\mathbb{CP}^1 = \left(\mathbb{CP}_{\ge 0}^1\right)^1 \cup \left(\mathbb{CP}_{\ge 0}^1\right)^2 \cup \ldots \cup\left(\mathbb{CP}^1_{\ge 0}\right)^{r-1}.
\]
Since the subsets on the right are Zariski closed, one of them must be equal to $\mathbb{CP}^1$, say $\left(\mathbb{CP}_{\ge 0}^1\right)^s$ for some $1 \le s \le r-1$. Again there is some line bundle $L$ on $\Tw(M)$ such that for all $I \in \mathbb{CP}^1$ there exist nontrivial morphisms $L_I \to \Lambda^s E_I$ over $M_I$ with image lying inside the cone of exterior monomials $C_s(E_I) \subseteq \Lambda^s E_I$. Hence $\pi_*(L^* \otimes \Lambda^s E)$ is a nonzero vector bundle on $\mathbb{CP}^1$, and by Grauert's theorem (Theorem 10.5.5 in \cite{grauert-remmert}), for all $I \in \mathbb{CP}^1$, except possibly finitely many, its fibre has the form
\[
\pi_*(L^* \otimes \Lambda^s E)_I \cong H^0(M_I, L_I^* \otimes \Lambda^s E_I) = \Hom_{M_I}(L_I, \Lambda^s E_I).
\]
However, unless $s = 1$ or $r-1$, it's no longer true that $C_s(E_I) = \Lambda^s E_I$, so taking an arbitrary line subsheaf
\[
0 \longrightarrow K \longrightarrow \pi_*(L^* \otimes \Lambda^s E)
\]
on $\mathbb{CP}^1$ no longer guarantees that the corresponding composition
\[
\pi^* K \longrightarrow \pi^*\pi_*(L^* \otimes \Lambda^s E) \longrightarrow L^* \otimes \Lambda^s E.
\]
on $\Tw(M)$ will take values in $C_s(E)$, so it will not in general give a rank $s$ subsheaf of $E$. Thus, while there are monomorphisms $L_I \hookrightarrow \Lambda^s E_I$ with values in $C_s(E_I)$ for all $I \in \mathbb{CP}^1$, it's not apparent that they can be ``glued'' into a global morphism over $\Tw(M)$.

To describe this problem slightly differently, take the projectivization of the vector bundle $\pi_*(L^* \otimes \Lambda^s E)$ on $\mathbb{CP}^1$, and note that there is a 1-to-1 correspondence between line subbundles of $\pi_*(L^* \otimes \Lambda^s E)$ and sections of the projection $v : \mathbb{P}(\pi_*(L^* \otimes \Lambda^s E)) \to \mathbb{CP}^1$. By Grauert's theorem, the generic fibre of $v$ looks like
\[
v^{-1}(I) = \mathbb{P}(\Hom_{M_I}(L_I, \Lambda^s E_I)),
\]
and since any scalar multiple of an element of $\Hom_{M_I}(L_I, \Lambda^s E_I)$ taking values in $C_s(E_I)$ clearly also takes values in $C_s(E_I)$, we get a well-defined closed analytic subset
\[
\xymatrix{Y := \left\{\textrm{Classes of maps } L_I \hookrightarrow \Lambda^s E_I \textrm{ with image in }  C_s(E_I)\right\} \ar@{^{(}->}[r] \ar[dr]_{u} & \mathbb{P}(\pi_*(L^* \otimes \Lambda^s E)) \ar[d]^v \\ & \mathbb{CP}^1,}
\]
where the map $u$ is surjective. The problem then reduces to finding a section of the map $u : Y \to \mathbb{CP}^1$, which would give a line subbundle $K \hookrightarrow \pi_*(L^* \otimes \Lambda^s E)$, from which one can construct a rank $s$ subsheaf of $E$ on $\Tw(M)$, as described in the previous paragraph. Note that at this point it becomes a purely algebraic problem, since $\mathbb{P}(\pi_*(L^* \otimes \Lambda^s E))$ is a projective algebraic variety, and so is $Y$, by Chow's theorem. Unfortunately, we don't have any information about the structure of $Y$, so we cannot assume the existence of a section of $u$. However, we have the following algebraic result.

\theoremstyle{plain}
\newtheorem{multisection}[result]{Lemma}
\begin{multisection} \label{thm:multisect}
Let $u : Y \to C$ be a surjective morphism of complex projective varieties, where $C$ is a smooth curve. There always exists a \emph{multisection} of $u$, in other words, an algebraic curve $X$, which we can assume to be smooth and projective, together with a branched cover $f : X \to C$, and a morphism $s : X \to Y$, making the diagram
\[
\xymatrix{& Y \ar[d]^u \\ X \ar[ur]^s \ar[r]_f & C}
\]
commute.
\end{multisection}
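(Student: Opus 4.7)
My plan is to reduce to a curve inside $Y$ still dominating $C$ by iterated generic hyperplane section, then normalize. First I would replace $Y$ by an irreducible component that surjects onto $C$ --- at least one such exists since $u$ is surjective and $Y$ has finitely many irreducible components --- so I may assume $Y$ is irreducible. If $\dim Y = 1$ already, taking $X \to Y$ to be the normalization finishes it: $X$ is a smooth projective curve, the induced map $f : X \to C$ is a finite morphism of smooth projective curves (hence a branched cover), and $s$ is the normalization morphism composed with the inclusion of $Y$ into itself.

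For $\dim Y \geq 2$ I would embed $Y$ in some projective space $\mathbb{P}^N$ and cut it by $\dim Y - 1$ successive sufficiently generic hyperplanes. Two facts control the process. First, by Bertini's irreducibility theorem, a generic hyperplane section of an irreducible projective variety of dimension $\geq 2$ is itself irreducible, so applying this inductively yields an irreducible closed curve $X_0 \subseteq Y$. Second, every fiber of $u$ has dimension $\geq \dim Y - 1 \geq 1$, hence is a positive-dimensional closed subvariety of $\mathbb{P}^N$, and such a subvariety necessarily meets every hyperplane; so the restriction of $u$ to each intermediate hyperplane section remains surjective onto $C$, and in particular $u(X_0) = C$. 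Since only finitely many Zariski-open conditions on the hyperplane choices enter, they can be satisfied simultaneously.

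I would then let $X$ denote the normalization of $X_0$, define $f : X \to C$ as the composition $X \to X_0 \hookrightarrow Y \stackrel{u}{\longrightarrow} C$ (a finite morphism of smooth projective curves, hence a branched cover), and take $s$ to be the composition $X \to X_0 \hookrightarrow Y$; the required diagram then commutes by construction. The only step I expect to need any real care is the preservation of surjectivity onto $C$ at each successive hyperplane cut, and this is immediate from the positive-dimensionality of the fibers of $u$ together with the classical fact that any positive-dimensional projective subvariety of $\mathbb{P}^N$ meets every hyperplane. Beyond this observation the argument is a routine combination of Bertini and normalization, so I do not anticipate a serious obstacle.
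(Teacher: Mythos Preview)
Your argument is correct and follows essentially the same route as the paper's: reduce $Y$ to a curve by successive hyperplane sections, preserving surjectivity onto $C$ via the positive fiber dimension, and then normalize. The only cosmetic difference is that you invoke Bertini to keep each intermediate section irreducible, whereas the paper simply passes at each inductive step to an irreducible component of $Y\cap H$ that still surjects onto $C$; neither choice affects the substance of the proof.
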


\begin{proof}
By induction on $\dim Y$. If $Y$ itself is a curve, then letting $X \stackrel{s}{\to} Y$ be its normalization does the trick. Now let $Y \subseteq \mathbb{CP}^N$ have arbitrary dimension, and suppose the result holds for all varieties of lower dimension. We can assume that $Y$ is irreducible. Let $H \subset \mathbb{CP}^N$ be a hyperplane that doesn't contain $Y$. Then the irreducible components of $Y \cap H$ have dimension $\dim Y - 1$ and at least one of them is mapped onto $C$ by the map $u: Y \to C$, hence we can apply the induction hypothesis to the restriction of $u$ to this component.
\end{proof}

Going back to the map $u : Y \to \mathbb{CP}^1$ we have constructed previously and applying this lemma, we get a multisection of $u$ over some branched cover $f : X \to \mathbb{CP}^1$. We proceed as follows in four steps to arrive at a contradiction.

\begin{enumerate}
\item[1.] We take the fibred product $Z$ of $\pi : \Tw(M) \to \mathbb{CP}^1$ and $f : X \to \mathbb{CP}^1$ as in the diagram
\[
\xymatrix{Z \ar[r]^-{\varphi} \ar[d]_\rho & \Tw(M) \ar[d]^\pi \\ X \ar[r]_f & \mathbb{CP}^1}
\]
and use the multisection obtained above to construct a subsheaf $\mathcal{F} \subset \varphi^*E$ of rank $s$ on $Z$. 
\item[2.] We take the pushforward of $\mathcal{F}$ and $\varphi^*E$ along $\varphi$ to obtain a rank $s$ subsheaf $\varphi_*(\mathcal{F}) \subset \varphi_*(\varphi^* E)$ over $\Tw(M)$. We show that $\varphi_*(\varphi^* E)$ is a direct sum of copies of $E$ twisted by some divisors on $\Tw(M)$:
\[
\varphi_*(\varphi^* E) \cong E(D_1) \oplus \ldots \oplus E(D_d).
\]
\item[3.] In view of the above direct sum decomposition of $\varphi_*(\varphi^* E)$ and the irreducibility of $E$, we show that the subsheaf $\varphi_*(\mathcal{F}) \subset E(D_1) \oplus \ldots \oplus E(D_d)$ is essentially isomorphic (in a sense to be made precise) to a direct sum of some of the $E(D_1), \ldots, E(D_d)$, say $E(D_1) \oplus \ldots \oplus E(D_t)$ with $t < d$.
\item[4.] Identifying $\varphi_*(\varphi^* E)$ with $E(D_1) \oplus \ldots \oplus E(D_d)$ and $\varphi_*(\mathcal{F})$ with $E(D_1) \oplus \ldots \oplus E(D_t)$, we show that if $E$ is simple over a fibre of the projection $\pi : \Tw(M) \to \mathbb{CP}^1$, any sheaf monomorphism from $E(D_1) \oplus \ldots \oplus E(D_t)$ to $E(D_1) \oplus \ldots \oplus E(D_d)$ has a particularly simple form, namely that of a $d \times t$ matrix of meromorphic functions from $\mathbb{CP}^1$. From this we can get a contradiction to the fact that $\mathcal{F}$ is a proper subsheaf of $\varphi^* E$ of lower rank on $Z$.
\end{enumerate}

To sum up the above in one sentence, the irreducibility of $E$ and the fact that is simple over a fibre of $\pi$ put rigid conditions on subsheaves of direct sums of copies of $E$ on $\Tw(M)$, from which one can conclude that the pullback bundle $\varphi^* E$ on $Z$ is irreducible as well, and this gives a contradiction to the existence of a subsheaf of $\varphi^* E$ constructed using Lemma \ref{thm:multisect}.

Before proceeding with the rest of the proof of Theorem \ref{thm:result}, we need one more technical result. Recall that so far all our sheaves have been sheaves of $\mathcal{O}$-modules, where $\mathcal{O}$ is the structure sheaf of the corresponding manifold. However, in our argument it will be useful to pass to a different category. Let $\mathcal{M}$ denote the sheaf of meromorphic functions on $\mathbb{CP}^1$, and let $\pi^* \mathcal{M}$ be its pullback to $\Tw(M)$ along the twistor projection $\pi : \Tw(M) \to \mathbb{CP}^1$. $\pi^* \mathcal{M}$ is not coherent, but it is a sheaf of rings on $\Tw(M)$ (in fact, a sheaf of $\mathcal{O}_{\Tw(M)}$-algebras), and so it gives rise to the corresponding abelian category $\pi^*\mathcal{M}\mathrm{\textbf{-Mod}}$. Note that the tensoring functor
\[
\begin{array}{ccccc}
- \otimes \pi^* \mathcal{M} & : & \mathcal{O}_{\Tw(M)}\mathrm{\textbf{-Mod}} & \longrightarrow & \pi^*\mathcal{M}\mathrm{\textbf{-Mod}}, \\
&& \mathcal{G} & \longmapsto & \mathcal{G} \otimes \pi^* \mathcal{M}
\end{array}
\]
is exact. Indeed, this follows from the fact that $\mathcal{M}$ is flat over $\mathbb{CP}^1$, which is a consequence of the fact that a sheaf over a smooth curve is flat if and only if its stalks are torsion-free.

Recall from Proposition \ref{thm:divisors} that taking pullbacks along the map $\pi : \Tw(M) \to \mathbb{CP}^1$ gives a one-to-one correspondence between divisors on $\mathbb{CP}^1$ and those on $\Tw(M)$. Given a divisor $D$ on $\mathbb{CP}^1$, we will denote by the same letter $D$ the corresponding divisor on $\Tw(M)$, and vice versa. The corresponding line bundle on $\mathbb{CP}^1$ will be denoted by $\mathcal{O}_{\mathbb{CP}^1}(D)$, and on $\Tw(M)$ by $\mathcal{O}_{\Tw(M)}(D)$. For a sheaf $\mathcal{F}$ of $\mathcal{O}_{\Tw(M)}$-modules, we denote
\[
\mathcal{F}(D) := \mathcal{F} \otimes_{\mathcal{O}_{\Tw(M)}} \mathcal{O}_{\Tw(M)}(D),
\]
and similarly for sheaves of $\mathcal{O}_{\mathbb{CP}^1}$-modules. Given a $\mathcal{O}_{\Tw(M)}$-sheaf $\mathcal{F}$ on $\Tw(M)$, for any pair of divisors $D \le D'$ we have a natural morphism $\mathcal{F}(D) \to \mathcal{F}(D')$. With these morphisms, the collection $\left\{ \mathcal{F}(D) \right\}$ forms a direct system of sheaves on $\Tw(M)$. For any divisor $D$, tensoring the inclusion $\mathcal{O}_{\Tw(M)}(D) \hookrightarrow \pi^* \mathcal{M}$ with $\mathcal{F}$ gives a morphism $\mathcal{F}(D) \to \mathcal{F} \otimes \pi^*\mathcal{M}$, and the morphisms thus obtained are compatible with the direct system, hence we have an induced map
\begin{equation} \label{dir-lim}
\varinjlim \mathcal{F}(D) \longrightarrow \mathcal{F} \otimes \pi^*\mathcal{M}
\end{equation}
of sheaves of $\mathcal{O}_{\Tw(M)}$-modules. As the following lemma shows, it is an isomorphism.

\theoremstyle{plain}
\newtheorem{direct-lim}[result]{Lemma}
\begin{direct-lim} \label{thm:direct-lim}
For any $\mathcal{O}_{\Tw(M)}$-sheaf $\mathcal{F}$ on $\Tw(M)$, the map \eqref{dir-lim} is an isomorphism. Furthermore, if $\mathcal{F}$ is a torsion-free coherent sheaf, then the maps $\mathcal{F}(D) \to \mathcal{F} \otimes \pi^*\mathcal{M}$ inducing \eqref{dir-lim} are all monomorphisms, and for any coherent sheaf $\mathcal{G}$ of $\mathcal{O}_{\Tw(M)}$-modules, any morphism $\mathcal{G} \to \mathcal{F} \otimes \pi^*\mathcal{M}$ factors through some $\mathcal{F}(D)$ in the category $\mathcal{O}_{\Tw(M)}$\emph{$\textrm{\textbf{-Mod}}$}, as in the diagram
\[
\xymatrix{ & \mathcal{F} \otimes \pi^* \mathcal{M} \\ \mathcal{G} \ar[ur] \ar@{-->}[r] & \mathcal{F}(D) \ar@{^{(}->}[u]}
\]
\end{direct-lim}

\begin{proof}
First observe that, on $\mathbb{CP}^1$, the natural inclusions $\mathcal{O}_{\mathbb{CP}^1}(D) \hookrightarrow \mathcal{M}$ are compatible with the direct system of sheaves $\left\{ \mathcal{O}_{\mathbb{CP}^1}(D) \right\}$ and thus induce a map
\begin{equation} \label{dir-lim-P1}
\varinjlim \mathcal{O}_{\mathbb{CP}^1}(D) \longrightarrow \mathcal{M},
\end{equation}
which is easily seen to be an isomorphism. If $\mathcal{F}$ is any $\mathcal{O}_{\Tw(M)}$-sheaf on $\Tw(M)$, then for any $x \in \Tw(M)$, the stalk map of \eqref{dir-lim} at $x$, which looks like
\begin{equation} \label{dir-lim-stalks}
\varinjlim \mathcal{F}_x \otimes_{\mathcal{O}_{\mathbb{CP}^1, \pi(x)}} \mathcal{O}_{\mathbb{CP}^1, \pi(x)}(D) \longrightarrow \mathcal{F}_x \otimes_{\mathcal{O}_{\mathbb{CP}^1, \pi(x)}} \mathcal{M}_{\pi(x)},
\end{equation}
is just the tensor product of the stalk map of \eqref{dir-lim-P1} at $\pi(x)$ (which is an isomorphism) with $\mathcal{F}_x$. This shows that \eqref{dir-lim} is an isomorphism at $x$ for arbitrary $x \in \Tw(M)$.
 
If now $\mathcal{F}$ is a torsion-free sheaf on $\Tw(M)$, then for any $x \in \Tw(M)$, $\mathcal{F}_x$ is a flat $\mathcal{O}_{\mathbb{CP}^1,\pi(x)}$-module, hence tensoring the inclusion $\mathcal{O}_{\mathbb{CP}^1, \pi(x)}(D) \hookrightarrow \mathcal{M}_{\pi(x)}$ with $\mathcal{F}_x$ produces an injective map
\[
\mathcal{F}_x \otimes_{\mathcal{O}_{\mathbb{CP}^1, \pi(x)}} \mathcal{O}_{\mathbb{CP}^1, \pi(x)}(D) \longhookrightarrow \mathcal{F}_x \otimes_{\mathcal{O}_{\mathbb{CP}^1, \pi(x)}} \mathcal{M}_{\pi(x)}.
\]
This shows that $\mathcal{F}(D) \to \mathcal{F} \otimes \pi^*\mathcal{M}$ is a monomorphism for any $D$. Now suppose that $\mathcal{G}$ is a coherent $\mathcal{O}_{\Tw(M)}$-sheaf, and let $\mathcal{G} \to \mathcal{F} \otimes \pi^*\mathcal{M}$ be any morphism. Since $\mathcal{G}$ is in particular of finite type, for every point $x \in \Tw(M)$ there exists a neighborhood $U \ni x$ and sections $s_1, \ldots, s_n \in \mathcal{G}(U)$ that generate $\mathcal{G}$ over $U$. Using the isomorphism \eqref{dir-lim-stalks}, we can conclude that there exists a divisor $D$ such that the image of each $s_i$ under the stalk map $\mathcal{G}_x \to \left(\mathcal{F} \otimes \pi^*\mathcal{M}\right)_x$ lies in $\mathcal{F}(D)_x \subset \left(\mathcal{F} \otimes \pi^*\mathcal{M}\right)_x$. By shrinking $U$ if necessary, we can conclude that the map $\mathcal{G} \to \mathcal{F} \otimes \pi^*\mathcal{M}$ factors through $\mathcal{F}(D)$ on $U$:
\[
\xymatrix{ & \left.\left(\mathcal{F} \otimes \pi^* \mathcal{M}\right)\right|_U \\ \left.\mathcal{G}\right|_U \ar[ur] \ar@{-->}[r] & \left.\mathcal{F}(D)\right|_U \ar@{^{(}->}[u]}
\]
Repeating the same argument for every point of $\Tw(M)$ and using compactness, we can conclude that there exist open sets $U_1, \ldots U_k$ that cover $\Tw(M)$ and divisors $D_1, \ldots, D_k$ such that $\mathcal{G} \to \mathcal{F} \otimes \pi^*\mathcal{M}$ factors through $\mathcal{F}(D_j)$ on $U_j$, similarly to the above. Then choosing $D$ such that $D_j \le D \, \forall j$, it's not hard to see that $\mathcal{G} \to \mathcal{F} \otimes \pi^*\mathcal{M}$ factors through $\mathcal{F}(D)$ on the whole $\Tw(M)$.
\end{proof}

\begin{proof}[Proof of converse of Theorem \ref{thm:result} in the case that $E$ is simple over some fibre of $\pi$]

Let \newline $E$ be an irreducible vector bundle of rank $r$ on $\Tw(M)$ such that its restriction to some fibre of $\pi : \Tw(M) \to \mathbb{CP}^1$ is a simple vector bundle. Arguing by contradiction, we assume that $E_I$ is non-stable for infinitely many $I \in \mathbb{CP}^1$.

%%%%%%%%% Step 1 %%%%%%%%%
\subsection*{Step 1} By Theorem \ref{thm:stable-twistor}, $E$ is non-stable for all $I \in \mathbb{CP}^1$, and by the discussion preceding Lemma \ref{thm:multisect}, there exists a number $1 \le s \le r-1$ and a line bundle $L$ on $\Tw(M)$ such that for all $I \in \mathbb{CP}^1$, there are non-trivial morphisms
\[
L_I \longrightarrow C_s(E_I) \subseteq \Lambda^s E_I
\]
over $M_I = \pi^{-1}(I)$, where as usual $L_I$ is the restriction of $L$ to $M_I$ and $C_s(E_I)$ denotes the cone of exterior monomials in $\Lambda^s E_I$. If $s = 1$ or $r-1$, $C_s(E_I) = \Lambda^s E_I$, and an argument entirely analogous to the case $\rk E = 3$ shows that these morphisms can be assembled into a line subsheaf of $\Lambda^sE$ on $\Tw(M)$ taking values in $C_s(E)$, which in turn contradicts the irreducibility of $E$, proving that $E$ is generically fibrewise stable. Assume $1 < s < r-1$.

As explained previously, $\pi_*(L^* \otimes \Lambda^sE)$ is a nonzero vector bundle on $\mathbb{CP}^1$ whose generic fibre is isomorphic to $\Hom_{M_I}(L_I, \Lambda^s E_I)$. Taking the corresponding projective bundle $\mathbb{P}\left(\pi_*(L^* \otimes \Lambda^sE)\right)$ over $\mathbb{CP}^1$, we have the closed algebraic subvariety
\[
\xymatrix{Y := \left\{\textrm{Classes of maps } L_I \hookrightarrow C_s(E_I) \subseteq \Lambda^s E_I \right\} \ar@{^{(}->}[r] \ar[dr]_{u} & \mathbb{P}(\pi_*(L^* \otimes \Lambda^s E)) \ar[d]^v \\ & \mathbb{CP}^1,}
\]
where $u$ is surjective. Applying Lemma \ref{thm:multisect}, there exists a multisection
\[
\xymatrix{& Y \ar[d]^u \\ X \ar[ur]^s \ar[r]_f & \mathbb{CP}^1}
\]
where $f : X \to \mathbb{CP}^1$ is branched cover of degree $d$. Taking the fibred product
\[
Y \times_{\mathbb{CP}^1} X \subseteq \mathbb{P}\left(f^*(\pi_*(L^* \otimes \Lambda^s E))\right),
\]
our multisection $s : X \to Y$ gives a section of the morphism $Y \times_{\mathbb{CP}^1} X \to X$, so we obtain a line subbundle
\[
0 \longrightarrow K \longrightarrow f^*(\pi_*(L^* \otimes \Lambda^s E))
\]
on $X$. Let $Z$ denote the fibred product of $f : X \to \mathbb{CP}^1$ and the twistor projection $\pi : \Tw(M) \to \mathbb{CP}^1$, as in the diagram
\begin{equation} \label{diagramma}
\xymatrix{Z \ar[r]^-{\varphi} \ar[d]_\rho & \Tw(M) \ar[d]^\pi \\ X \ar[r]_f & \mathbb{CP}^1}
\end{equation}
On $X$, there is a canonical morphism,
\[
f^*(\pi_*(L^* \otimes \Lambda^s E)) \longrightarrow \rho_*(\varphi^*(L^* \otimes \Lambda^s E)),
\]
which over a generic point is easily seen to be an isomorphism, since $f : X \to \mathbb{CP}^1$ is a finite map. Composing this with the line subbundle $K \to f^*(\pi_*(L^* \otimes \Lambda^s E))$ constructed above, we get a line subsheaf
\[
0 \longrightarrow K \longrightarrow \rho_*(\varphi^*(L^* \otimes \Lambda^s E)) = \rho_*(\left[\varphi^*L\right]^* \otimes \varphi^*\Lambda^s E)
\]
on $X$, and pulling back along $\rho$ to $Z$, we take the composition
\[
\rho^*(K) \longrightarrow \rho^*\rho_*(\left[\varphi^*L\right]^* \otimes \varphi^*\Lambda^s E) \longrightarrow \left[\varphi^*L\right]^* \otimes \varphi^*\Lambda^s E
\]
with the corresponding evaluation map. This composition is a monomorphism since it is nonzero, and it takes values in $\left[\varphi^*L\right]^* \otimes \varphi^*C_s(E)$ by construction. Tensoring with $\varphi^*L$, we get a line subsheaf of $\varphi^*\Lambda^s E = \Lambda^s (\varphi^* E)$ which takes values in $\varphi^*C_s(E) = C_s(\varphi^*E)$. By the correspondence \eqref{correspondence}, this line subsheaf gives rise to a rank $s$ normal subsheaf $\mathcal{F} \subset \varphi^* E$ on $Z$.

%%%%%%%%% Step 2 %%%%%%%%%
\subsection*{Step 2} Taking the pushforward of the sheaf monomorphism $\mathcal{F} \hookrightarrow \varphi^* E$ that we just obtained along the map $\varphi$, we obtain by the left-exactness of $\varphi_*$ a sheaf monomorphism
\[
\gamma : \varphi_*(\mathcal{F}) \hooklongrightarrow \varphi_*(\varphi^* E)
\]
on $\Tw(M)$. Since $\mathcal{F}$ was normal over $Z$, $\varphi_*(\mathcal{F})$ is normal over $\Tw(M)$. As for $\varphi_*(\varphi^* E)$, it happens to be a vector bundle whose structure can be described nicely in terms of the original bundle $E$. In the diagram \eqref{diagramma} we have an isomorphism
\[
\pi^*(f_*(\mathcal{O}_X)) \cong \varphi_*(\rho^*(\mathcal{O}_X))
\]
(see Theorem III.3.10 and its corollaries in \cite{banica}). Using the Birkhoff-Grothendieck theorem, we can write
\[
\varphi_*(\mathcal{O}_Z) = \varphi_*(\rho^*(\mathcal{O}_X)) \cong \pi^*(f_*(\mathcal{O}_X)) \cong \pi^*\left(\bigoplus_{l=1}^d \mathcal{O}_{\mathbb{CP}^1}(D_l)\right) = \bigoplus_{l=1}^d \mathcal{O}_{\Tw(M)}(D_l),
\]
where $D_1, \ldots, D_d$ are some divisors on $\mathbb{CP}^1$. Using this decomposition and the projection formula, we have
\[
\varphi_*(\varphi^* E) = \varphi_*(\varphi^*E \otimes \mathcal{O}_Z) \cong E \otimes \varphi_*(\mathcal{O}_Z) \cong E(D_1) \oplus \ldots \oplus E(D_d).
\]
We will denote $E_l := E(D_l)$ for $1 \le l \le d$. With this identification, the sheaf monomorphism $\gamma$ has the form
\[
\gamma : \varphi_*(\mathcal{F}) \hooklongrightarrow \varphi_*(\varphi^* E) \cong E_1 \oplus \ldots \oplus E_d.
\]

%%%%%%%%% Step 3 %%%%%%%%%
\subsection*{Step 3}
We would now like to show that there exists a choice of a subset $\left\{i_1, \ldots, i_t \right\} \subseteq \left\{1, \ldots, d\right\}$ such that the composition
\[
\varphi_*(\mathcal{F}) \stackrel{\gamma}{\longrightarrow} E_1 \oplus \ldots \oplus E_d \longrightarrow E_{i_1} \oplus \ldots \oplus E_{i_t}
\]
(where the second arrow is the usual projection) is a monomorphism of sheaves with quotient being a torsion sheaf. Let $1 \le j \le d$ and look at the composition
\[
\varphi_*(\mathcal{F}) \stackrel{\gamma}{\longrightarrow} E_1 \oplus \ldots \oplus E_d \longrightarrow \bigoplus_{l \ne j} E_l = E_1 \oplus \ldots \oplus \widehat{E_j} \oplus \ldots \oplus E_d.
\]
Let $K_j$ denote the kernel of this composition. We have the following diagram with exact rows
\[
\xymatrix{0 \ar[r] & K_j \ar[r] \ar@{-->}[d] & \varphi_*(\mathcal{F}) \ar[r] \ar@{^{(}->}[d]^{\gamma} & \bigoplus_{l \ne j} E_l  \ar@{=}[d] & \\ 0 \ar[r] & E_j \ar[r] & E_1 \oplus \ldots \oplus E_d \ar[r] & \bigoplus_{l \ne j} E_l \ar[r] &0}
\]
It follows from the irreducibility of $E_j = E(D_j)$ that the the induced morphism $K_j \to E_j$ is either zero or generically an isomorphism. If the latter is true for every $j$ from 1 to $d$, then $\rk \varphi_*(\mathcal{F}) = \rk \left[E_1 \oplus \ldots \oplus E_d\right]$, but this cannot be as $\varphi_*(\mathcal{F})$ is a subsheaf of $\varphi_*(\varphi^* E)$ of lower rank. Fixing an index $j$ such that $K_j = 0$, the composition
\[
\varphi_*(\mathcal{F}) \stackrel{\gamma}{\longrightarrow} E_1 \oplus \ldots \oplus E_d \longrightarrow \bigoplus_{l \ne j} E_l
\]
must be a monomorphism. If $\rk \varphi_*(\mathcal{F}) = \rk \bigoplus_{l \ne j} E_l$, we stop here. If not, we repeat the argument above with $\left\{1, \ldots, d\right\}$ replaced by $\left\{1, \ldots, \widehat{j}, \ldots, d\right\}$ to conclude the existence of an index $k \in \left\{1, \ldots, \widehat{j}, \ldots, d\right\}$ such that the composition
\[
\varphi_*(\mathcal{F}) \longrightarrow \bigoplus_{l \ne j} E_l \longrightarrow \bigoplus_{l \ne j, k} E_l
\]
is still a monomorphism. Continuing in this manner, we eventually arrive at a monomorphism $\varphi_*(\mathcal{F}) \hookrightarrow E_{i_1} \oplus \ldots \oplus E_{i_t}$ with $\rk \varphi_*(\mathcal{F}) = \rk \left[E_{i_1} \oplus \ldots \oplus E_{i_t}\right]$, whose quotient is clearly a torsion sheaf. We cannot have $t = 0$ as this would imply $\varphi_*(\mathcal{F}) = 0$, contrary to the construction of $\mathcal{F}$. Rearranging indices if necessary, we can assume that $E_{i_1} \oplus \ldots \oplus E_{i_t} = E_1 \oplus \ldots \oplus E_t$.

We denote the sheaf monomorphism that we just constructed by
\[
\mu : \varphi_*(\mathcal{F}) \longhookrightarrow E_1 \oplus \ldots \oplus E_t.
\]
In the short exact sequence
\begin{equation} \label{***}
0 \longrightarrow \varphi_*(\mathcal{F}) \stackrel{\mu}{\longrightarrow} E_1 \oplus \ldots \oplus E_t \longrightarrow \mathcal{T} \longrightarrow 0,
\end{equation}
$\mathcal{T}$ is a torsion sheaf, as noted above. Thus, the morphism $\mu : \varphi_*(\mathcal{F}) \to E_1 \oplus \ldots \oplus E_t$ has an inverse outside a subset of positive codimension in $\Tw(M)$, and we would like to extend this inverse to all of $\Tw(M)$ in some way.

We start by observing that $\Supp(\mathcal{T})$ has pure codimension 1 in $\Tw(M)$. Indeed, for any open neighborhood $U \subseteq \Tw(M)$ and $A \subset U$ a subset of codimension $\ge 2$, we have the following diagram with exact rows:
\[
\xymatrix{ 0 \ar[r] & \varphi_*(\mathcal{F})(U) \ar[r]^-{\mu(U)} \ar[d]^\cong & \left[E_1 \oplus \ldots \oplus E_t\right](U) \ar[r] \ar[d]^\cong & \mathcal{T}(U) \ar[d] \\ 0 \ar[r] & *+<1pc>{\varphi_*(\mathcal{F})(U \setminus A)} \ar[r]^-{\mu(U \setminus A)} & *+<1pc>{\left[E_1 \oplus \ldots \oplus E_t\right](U \setminus A)} \ar[r] & \mathcal{T}(U \setminus A) }
\]
The first and second vertical arrows are isomorphisms because the corresponding sheaves are normal. If $\zeta \in \mathcal{T}(U)$ is some section such that $\Supp(\zeta) = A$, then, by shrinking $U$ if necessary, we can assume that it comes from some section $\xi \in \left[E_1 \oplus \ldots \oplus E_t\right](U)$. As the restriction of $\zeta$ to $U \setminus A$ is zero, it follows by the exactness of the second row that the restriction of $\xi$ to $U \setminus A$ comes from $\varphi_*(\mathcal{F})(U \setminus A)$. But then the same must hold over $U$, so by the exactness of the first row, $\zeta$ must be zero over $U$, which is a contradiction. This means that $\Supp(\mathcal{T})$ lies on a divisor.

We now pass to the category $\pi^*\mathcal{M}\mathrm{\textbf{-Mod}}$. As we have noted previously, the tensoring functor $- \otimes \pi^* \mathcal{M}$ is exact, so applying it to \eqref{***}, we get the short exact sequence
\[
0 \longrightarrow \varphi_*(\mathcal{F}) \otimes \pi^* \mathcal{M} \xrightarrow{\mu \otimes \pi^* \mathcal{M}} \left[E_1 \oplus \ldots \oplus E_t\right] \otimes \pi^* \mathcal{M} \longrightarrow \mathcal{T} \otimes \pi^* \mathcal{M} \longrightarrow 0
\]
in $\pi^*\mathcal{M}\mathrm{\textbf{-Mod}}$. By the discussion in the previous paragraph, we know that the support of every nonzero element $\zeta \in \mathcal{T}_x$, for any $x \in \Tw(M)$, lies on the hypersurface $V = \pi^{-1}(\pi(x))$, hence $\zeta$ is annihilated by a sufficiently high power of a local coordinate about $\pi(x)$ in $\mathbb{CP}^1$. Thus, $\mathcal{T} \otimes \pi^* \mathcal{M} = 0$, and so $\mu \otimes \pi^* \mathcal{M} : \varphi_*(\mathcal{F}) \otimes \pi^* \mathcal{M} \to \left[E_1 \oplus \ldots \oplus E_t\right] \otimes \pi^* \mathcal{M}$ is an isomorphism of $\pi^* \mathcal{M}$-modules. We denote its inverse in the category $\pi^*\mathcal{M}\mathrm{\textbf{-Mod}}$ by
\[
\eta : \left[E_1 \oplus \ldots \oplus E_t\right] \otimes \pi^* \mathcal{M} \longrightarrow \varphi_*(\mathcal{F}) \otimes \pi^* \mathcal{M}.
\]
Applying Lemma \ref{thm:direct-lim}, there exists a divisor $D$ and a morphism $E_1 \oplus \ldots \oplus E_t \to \varphi_*(\mathcal{F})$ (which we also denote by $\eta$) which completes the following diagram in the category $\mathcal{O}_{\Tw(M)}\mathrm{\textbf{-Mod}}$:
\[
\xymatrix{\left[E_1 \oplus \ldots \oplus E_t\right] \otimes \pi^* \mathcal{M} \ar[r]^-\eta & \varphi_*(\mathcal{F}) \otimes \pi^* \mathcal{M} \\ *+<1pc>{E_1 \oplus \ldots \oplus E_t} \ar@{^{(}->}[u] \ar@{-->}[r]^\eta & *+<1pc>{\varphi_*(\mathcal{F})(D)} \ar@{^{(}->}[u]}
\]
The morphism $\eta : E_1 \oplus \ldots \oplus E_t \to \varphi_*(\mathcal{F})(D)$ is a partial inverse to the morphism $\mu : \varphi_*(\mathcal{F}) \to E_1 \oplus \ldots \oplus E_t$ constructed above, in the sense of the following commutative diagram:
\begin{equation} \label{inverses}
\xymatrix{\varphi_*(\mathcal{F})(D) \ar[r]^-{\mu(D)} & \left[E_1 \oplus \ldots \oplus E_t\right](D) \\ *+<1pc>{\varphi_*(\mathcal{F})} \ar[r]^-{\mu} \ar@{^{(}->}[u] & *+<1pc>{E_1 \oplus \ldots \oplus E_t} \ar@{^{(}->}[u] \ar[ul]_{\eta}}
\end{equation}
Note that outside $\Supp(D)$, $\mu$ and $\eta$ are bona fide inverses of each other. 

%%%%%%%%% Step 4 %%%%%%%%%
\subsection*{Step 4} In Step 2 we have constructed the sheaf monomorphism $\gamma : \varphi_*(\mathcal{F}) \hookrightarrow \varphi_*(\varphi^* E) \cong E_1 \oplus \ldots \oplus E_d$ on $\Tw(M)$ as the pushforward by the map $\varphi$ from the diagram \eqref{diagramma} of the sheaf inclusion $\mathcal{F} \hookrightarrow \varphi^*E$ on $Z$. Identifying $\varphi_*(\mathcal{F})$ with $E_1 \oplus \ldots \oplus E_t$ via the diagram \eqref{inverses} above, we will show that the morphism $\gamma$ has a particularly simple form. Look at the composition
\begin{equation} \label{gamma-matrix}
E_1(-D) \oplus \ldots \oplus E_t(-D) \xrightarrow{\eta(-D)} \varphi_*(\mathcal{F}) \stackrel{\gamma}{\longrightarrow} E_1 \oplus \ldots \oplus E_d.
\end{equation}
Take any summand $E_j(-D)$, $1 \le j \le t$, on the left, and any summand $E_i$, $1 \le i \le d$, on the right. The morphisms from $E_j(-D)$ to $E_i$ on $\Tw(M)$ are given by:
\[
\Hom\left(E_j(-D), E_i\right) = \Hom\left(E(D_j - D), E(D_i)\right) =
\]
\[
= H^0(\Tw(M), E^*(-D_j + D) \otimes E(D_i)) = H^0(\Tw(M), (E^* \otimes E)(-D_j + D + D_i)) =
\]
\[
= H^0(\mathbb{CP}^1, \pi_*\left[(E^* \otimes E)(-D_j + D + D_i)\right]) = H^0(\mathbb{CP}^1, \pi_*(E^* \otimes E)(-D_j + D + D_i)),
\]
where we have used the projection formula in the last line with respect to the twistor projection $\pi : \Tw(M) \to \mathbb{CP}^1$. The pushforward sheaf $\pi_*(E^* \otimes E)$ on $\mathbb{CP}^1$ is locally free since it's torsion-free, so it splits as a direct sum of line bundles. Since the vector bundle $E$ on $\Tw(M)$ is irreducible, it is stable, hence it is simple (see \cite{lubke-teleman}, Proposition 1.4.5), so we have $H^0(\mathbb{CP}^1, \pi_*(E^* \otimes E)) = H^0(\Tw(M), E^* \otimes E) = \mathbb{C}$. It follows that in the direct sum decomposition of $\pi_*(E^* \otimes E)$, there is exactly one summand of the form $\mathcal{O}_{\mathbb{CP}^1}$, while all other summands (if any) are negative line bundles.

On the other hand, it's clear that for any $I \in \mathbb{CP}^1$, $\dim H^0(M_I, E_I^* \otimes E_I) \ge 1$, and since the restriction of $E$ to some fibre of $\pi$ is simple, we know that equality is attained for at least one $I$. Applying the semicontinuity theorem (\cite{okonek}, p. 5) to $E^* \otimes E$, we conclude that $H^0(M_I, E_I^* \otimes E_I)$ has rank 1 for $I$ in a nonempty Zariski open subset of $\mathbb{CP}^1$, and so, by the previous paragraph, $\pi_*(E^* \otimes E) = \mathcal{O}_{\mathbb{CP}^1}$. With this in mind, we have
\[
\Hom\left(E_j(-D), E_i\right) = H^0(\mathbb{CP}^1, \mathcal{O}_{\mathbb{CP}^1}(-D_j + D + D_i)),
\]
and it follows from this that the composition \eqref{gamma-matrix} is simply a $d \times t$ matrix of meromorphic functions from $\mathbb{CP}^1$ that we will denote by $A$.

We will now complete our argument by showing that the above description of the morphism $\gamma : \varphi_*(\mathcal{F}) \to \varphi_*(\varphi^* E)$ as a matrix morphism is incompatible with the fact that $\mathcal{F}$ is a proper subsheaf of $\varphi^* E$ of lower rank on $Z$. Recall that in the diagram \eqref{diagramma} $f$ and $\varphi$ are branched coverings of degree $d$. We can choose a nonempty open neighborhood $U \subseteq \Tw(M)$ such that

\begin{enumerate}
\item[(i)] $U$ is evenly covered by $\varphi$, that is, $\varphi^{-1}(U) \subseteq Z$ is a disjoint union of open neighborhoods $U_1, \ldots, U_d$ such that $\forall 1 \le i \le d$, $\varphi$ restricted to $U_i$ is an isomorphism
\[
\varphi|_{U_i} : U_i \stackrel{\cong}{\longrightarrow} U;
\]

\item[(ii)] $U$ does not intersect $\Supp(D)$ in $\Tw(M)$;

\item[(iii)] the line bundles $\mathcal{O}_{\Tw(M)}(D_1), \ldots, \mathcal{O}_{\Tw(M)}(D_d)$ trivialize on the open set $U$.
\end{enumerate}

We now look at the restriction of the sheaf monomorphism $\gamma : \varphi_*(\mathcal{F}) \to \varphi_*(\varphi^* E)$ to $U \subseteq \Tw(M)$. $\gamma$ can be described in two different ways. Firstly, as $U$ is evenly covered by $U_1, \ldots, U_d$, we have
\[
\left.\varphi_*(\mathcal{F})\right|_U \cong \left.\mathcal{F}\right|_{U_1} \oplus \ldots \oplus \left.\mathcal{F}\right|_{U_d}, \quad \left.\varphi_*(\varphi^* E)\right|_U \cong \left.\varphi^* E\right|_{U_1} \oplus \ldots \oplus \left.\varphi^* E\right|_{U_d},
\]
and $\gamma$ is simply the direct sum of the monomorphisms $\left.\mathcal{F}\right|_{U_i} \hookrightarrow \left.\varphi^* E\right|_{U_i}$ defining $\mathcal{F}$ as a subsheaf of $\varphi^* E$ on $Z$:
\[
\left.\gamma\right|_U : \left.\mathcal{F}\right|_{U_1} \oplus \ldots \oplus \left.\mathcal{F}\right|_{U_d} \longrightarrow \left.\varphi^* E\right|_{U_1} \oplus \ldots \oplus \left.\varphi^* E\right|_{U_d}.
\]
Secondly, outside $\Supp(D)$ (and in particular on $U$), the morphsim $\eta(-D)$ in the composition \eqref{gamma-matrix} is an isomorphism, hence we can identify $\varphi_*(\mathcal{F})$ with $E_1(-D) \oplus \ldots \oplus E_t(-D)$, while $\varphi_*(\varphi^* E)$ can be identified with $E_1 \oplus \ldots \oplus E_d$ in the usual way:
\[
\left.\varphi_*(\mathcal{F})\right|_U \cong \left.E_1(-D)\right|_{U} \oplus \ldots \oplus \left.E_t(-D)\right|_{U}, \quad \left.\varphi_*(\varphi^* E)\right|_U \cong \left.E_1\right|_{U} \oplus \ldots \oplus \left.E_d\right|_{U}.
\]
With these identifications, $\left.\gamma\right|_U$ is simply the matrix morphism $A$ above:
\[
A : \left.E_1(-D)\right|_{U} \oplus \ldots \oplus \left.E_t(-D)\right|_{U} \longrightarrow \left.E_1\right|_{U} \oplus \ldots \oplus \left.E_d\right|_{U}.
\]
Note that, in view of \eqref{inverses}, the composition of $A$ with the projection onto the first $t$ summands of $\left.E_1\right|_{U} \oplus \ldots \oplus \left.E_d\right|_{U}$ is an isomorphism on $U$.

To reconcile these two descriptions of the morphism $\gamma$, observe that by choice of $U \subseteq \Tw(M)$, we have two trivializations of the vector bundle $\varphi_*(\mathcal{O}_Z)$ on $U$:
\[
\left.\varphi_*(\mathcal{O}_Z)\right|_U \stackrel{\cong}{\longrightarrow} \mathcal{O}_{U_1} \oplus \ldots \oplus \mathcal{O}_{U_d} \cong \mathcal{O}_U^{\oplus d}, \ \left.\varphi_*(\mathcal{O}_Z)\right|_U \stackrel{\cong}{\longrightarrow} \mathcal{O}_{U}(D_1) \oplus \ldots \oplus \mathcal{O}_{U}(D_d) \cong \mathcal{O}_U^{\oplus d}.
\]
Here the first trivialization comes from the fact that $U \subseteq \Tw(M)$ is evenly covered by $U_1, \ldots, U_d \subseteq Z$, while the second trivialization comes from the isomorphism
\[
\varphi_*(\mathcal{O}_Z) \cong \mathcal{O}_{\Tw(M)}(D_1) \oplus \ldots \oplus \mathcal{O}_{\Tw(M)}(D_d)
\]
that was obtained in Step 2. If we denote by $B : \mathcal{O}_U^{\oplus d} \longrightarrow \mathcal{O}_U^{\oplus d}$ the transition function from the second to the first trivialization, the two descriptions of the morphism $\gamma$ on $U$ from the previous paragraph now fit into the following commutative diagram:
\[
\xymatrix@C+3.5pc{ \left.\mathcal{F}\right|_{U_1} \oplus \ldots \oplus \left.\mathcal{F}\right|_{U_d} \ar[r]^-{\left.\gamma\right|_U} & **[l] \left.\varphi^* E\right|_{U_1} \oplus \ldots \oplus \left.\varphi^* E\right|_{U_d} \cong \left.E\right|_U^{\oplus d}
\\
\left.E_1(-D)\right|_{U} \oplus \ldots \oplus \left.E_t(-D)\right|_{U} \ar[r]^-{A} \ar[u]^{\eta(-D)}_\cong & **[l]\left.E_1\right|_{U} \oplus \ldots \oplus \left.E_d\right|_{U} \cong \left.E\right|_U^{\oplus d} \ar@<1ex>[u]^{B}_\cong}
\]
Since $\mathcal{F}$ was constructed as a proper subsheaf of $\varphi^* E$ of lower rank, composing the upper horizontal arrow with the projection onto any direct summand of $\left.E\right|_U^{\oplus d}$ will yield a map which is not surjective at any point of $U$. On the other hand, as noted at the end of the previous paragraph, the composition of the lower horizontal arrow with the projection onto any of the first $t$ direct summands of $\left.E\right|_U^{\oplus d}$ is surjective. These two statements contradict each other, since the right-hand vertical arrow $B : \left.E\right|_U^{\oplus d} \to \left.E\right|_U^{\oplus d}$ is simply a $d \times d$ matrix of holomorphic functions on $U$, everywhere non-singular. We conclude that our original assumption that $E$ is not generically fibrewise stable is wrong, finishing the proof of the theorem.

\end{proof}

\end{document}